\begin{document}

\newcommand{\ddd}{\,{\rm d}}

\def\note#1{\marginpar{\small #1}}
\def\tens#1{\pmb{\mathsf{#1}}}
\def\vec#1{\boldsymbol{#1}}
\def\norm#1{\left|\!\left| #1 \right|\!\right|}
\def\fnorm#1{|\!| #1 |\!|}
\def\abs#1{\left| #1 \right|}
\def\ti{\text{I}}
\def\tii{\text{I\!I}}
\def\tiii{\text{I\!I\!I}}

\newcommand{\loc}{{\rm loc}}
\def\diver{\mathop{\mathrm{div}}\nolimits}
\def\grad{\mathop{\mathrm{grad}}\nolimits}
\def\Div{\mathop{\mathrm{Div}}\nolimits}
\def\Grad{\mathop{\mathrm{Grad}}\nolimits}
\def\tr{\mathop{\mathrm{tr}}\nolimits}
\def\cof{\mathop{\mathrm{cof}}\nolimits}
\def\det{\mathop{\mathrm{det}}\nolimits}
\def\lin{\mathop{\mathrm{span}}\nolimits}
\def\pr{\noindent \textbf{Proof: }}

\def\pp#1#2{\frac{\partial #1}{\partial #2}}
\def\dd#1#2{\frac{\d #1}{\d #2}}
\def\bA{\tens{A}}
\def\T{\mathcal{T}}
\def\R{\mathcal{R}}
\def\bx{\vec{x}}
\def\be{\vec{e}}
\def\bef{\vec{f}}
\def\bec{\vec{c}}
\def\bs{\vec{s}}
\def\ba{\vec{a}}
\def\bn{\vec{n}}
\def\bphi{\vec{\varphi}}
\def\btau{\vec{\tau}}
\def\bc{\vec{c}}
\def\bg{\vec{g}}
\def\mO{\mathcal{O}}
\def\pmO{\partial\mathcal{O}}
\def\bE{\tens{\varepsilon}}
\def\bsig{\tens{\sigma}}
\def\bW{\tens{W}}
\def\bT{\tens{T}}
\def\bxi{\tens{\xi}}
\def\bD{\tens{D}}
\def\bF{\tens{F}}
\def\bB{\tens{B}}
\def\bV{\tens{V}}
\def\bS{\tens{S}}
\def\bI{\tens{I}}
\def\bi{\vec{i}}
\def\bv{\vec{v}}
\def\bfi{\vec{\varphi}}
\def\bk{\vec{k}}
\def\b0{\vec{0}}
\def\bom{\vec{\omega}}
\def\bw{\vec{w}}
\def\p{\pi}
\def\bu{\vec{u}}
\def\bz{\vec{z}}
\def\bep{\vec{e}_{\textrm{p}}}
\def\dbep{\dot{\vec{e}}_{\textrm{p}}}
\def\bee{\vec{e}_{\textrm{el}}}
\def\dbee{\dot{\vec{e}}_{\textrm{el}}}
\def\ID{\mathcal{I}_{\bD}}
\def\IP{\mathcal{I}_{p}}
\def\Pn{(\mathcal{P})}
\def\Pe{(\mathcal{P}^{\eta})}
\def\Pee{(\mathcal{P}^{\varepsilon, \eta})}
\def\dbx{\,{\rm d} \bx}
\def\dx{\,{\rm d}x}
\def\dr{\,{\rm d}r}
\def\ds{\,{\rm d}s}
\def\dt{\,{\rm d}t}
\def\omer{\omega^{\delta}_{r_0}}
\def\ber{b^{\delta}_{r_0}}
\def\ver{v^{\delta,r_0}_{\bk, c}}
\def\hf{\hat{f}}


\newtheorem{Theorem}{Theorem}[section]

\newtheorem{Example}{Example}[section]
\newtheorem{Lemma}{Lemma}[section]
\newtheorem{Rem}{Remark}[section]
\newtheorem{Def}{Definition}[section]
\newtheorem{Col}{Corollary}[section]

\numberwithin{equation}{section}

\title[Minimizers for functionals with linear growth]{Globally Lipschitz minimizers for variational problems with linear growth}
\thanks{M.~Bul\'{\i}\v{c}ek and E.~Maringov\'{a} were supported by the ERC-CZ project LL1202  financed by the Ministry of Education, Youth and Sports, Czech Republic.  The support of the  the grant SVV-2016-260335 is also acknowledged. M.~Bul\'{\i}\v{c}ek is a  member of the Ne\v{c}as Center for Mathematical Modeling.}

\author[L. Beck]{Lisa Beck}
\address{Institut f\"{u}r Mathematik, Universit\"{a}t Augsburg,\\
 Universit\"{a}tsstr. 14, 86159~Augsburg, Germany}
\email{lisa.beck@math.uni-augsburg.de}

\author[M.~Bul\'{i}\v{c}ek]{Miroslav Bul\'i\v{c}ek}
\address{Mathematical Institute, Faculty of Mathematics and Physics, Charles University, Sokolovsk\'{a} 83, 186~75, Prague, Czech Republic}
\email{mbul8060@karlin.mff.cuni.cz}

\author[E.~Maringov\'{a}]{Erika Maringov\'{a}}
\address{Mathematical Institute, Faculty of Mathematics and Physics, Charles University, Sokolovsk\'{a} 83, 186~75, Prague, Czech Republic}
\email{maringova@karlin.mff.cuni.cz}

\keywords{Variational problems, linear growth, Lipschitz minimizers, non-convex domains}

\subjclass[2010]{35A01,35B65,35J70,49N60}

\begin{abstract}
We study the minimization of convex, variational integrals of linear growth among all functions in the Sobolev space $W^{1,1}$ with prescribed boundary values (or its equivalent formulation as a boundary value problem for a degenerately elliptic Euler--Lagrange equation). Due to insufficient compactness properties of these Dirichlet classes, the existence of solutions does not follow in a standard way by the direct method in the calculus of variations and in fact might fail, as it is well-known already for the non-parametric minimal surface problem. Assuming radial structure, we establish a necessary and sufficient condition on the integrand such that the Dirichlet problem is in general solvable, in the sense that a Lipschitz solution exists for any regular domain and all prescribed regular boundary values, via the  construction of appropriate barrier functions in the tradition of Serrin's paper~\cite{SERRIN69}.
\end{abstract}

\maketitle

\section{Introduction}

In this paper we are concerned with the existence of (unique) scalar-valued Lipschitz solutions to the Dirichlet problem
\begin{equation}\label{P1}
\begin{aligned}
-\diver \left(a(|\nabla u|)\nabla u \right)&=0 &&\textrm{ in } \Omega,\\
u&=u_0 &&\textrm{ on } \partial \Omega,
\end{aligned}
\end{equation}
where $\Omega \subset \mathbb{R}^d$ is a bounded, regular domain  and with regular prescribed boundary values~$u_0$. The focus is on coefficient functions $a \in \mathcal{C}^1(\mathbb{R}^+)$, which on the one hand represent a radial structure condition and which on the other belong to a convex linear growth problem which is naturally formulated in the Sobolev space $W^{1,1}(\Omega)$, meaning that we work under the permanent assumption that the function $s \mapsto a(s) s$ is increasing and remains bounded. In this setting, the existence of a weak solution to the Dirichlet problem~\eqref{P1} is equivalent to the existence of a minimizer of a related (convex) variational integral in the Dirichlet class $u_0 + W^{1,1}_0(\Omega)$, and we may equivalently look for a function $u \in u_0 + W^{1,1}_0(\Omega)$ such that for all smooth, compactly supported test function $\varphi \in \mathcal{D}(\Omega)$ we have
\begin{equation}\label{min}
\int_{\Omega}F(|\nabla u|)\dx \le \int_{\Omega}F(|\nabla u_0 +\nabla \varphi|)\dx,
\end{equation}
where $F$ and $a$ are linked via the identity
\begin{equation}
F'(s)=a(s)s \qquad \text{for all } s \in \mathbb{R}^+.
\label{min2}
\end{equation}

The minimal surface equation is clearly the most prominent example for such a Dirichlet problem, and other prototypic examples are given via the coefficient functions
\begin{equation}
\label{proto_example}
 a_p(s) \coloneqq (1+s^p)^{- \frac{1}{p}}
\end{equation}
for $s \in \mathbb{R}^+$ and $p>0$ (which for the specific case $p=2$ is just the minimal surface equation).

One peculiarity of linear growth problems is that, even if the equation~\eqref{P1} is monotone and the integrand $z \mapsto F(|z|)$ of the variational functional is convex, standard monotonicity methods and the direct method of the calculus of variations fail in general, since the Sobolev space $W^{1,1}(\Omega)$ is non-reflexive and hence has insufficient compactness properties. For the study of such linear growth problem of variational type, one common strategy is to extend in a first step the functional by lower semicontinuity (in the sense of Lebesgue and Serrin~\cite{LEBESGUE02,SERRIN61}) to the larger space $BV(\Omega)$ of functions of bounded variation, i.e., to consider, for fixed boundary values~$u_0$, the functional
\begin{equation*}
 w \mapsto \inf \Big\{ \liminf_{n \to \infty} \int_{\Omega}F(|\nabla w_n|)\dx \colon (w_n)_{n \in \mathbb{N}} \textrm{ in } u_0 + W^{1,1}_0(\Omega) \textrm{ with } w_n \to w \textrm{ in } L^1(\Omega) \Big\}
\end{equation*}
with $w \in BV(\Omega)$. This extension also allows for an integral representation, see e.g.~\cite{GIAMODSOU79}, which consists in the original functional evaluated for the absolutely continuous part of the measure derivative and penalization terms for non-vanishing singular measure derivative or non-attainment of the prescribed boundary values (note that the trace operator is in general not continuous with respect to weak-$*$ convergence in $BV(\Omega)$). In a second step, one can then study minimizers of the extended functional in $BV(\Omega)$ (which can be interpreted as generalized minimizers of the original functional), which exist as a consequence of the direct method, applied in $BV(\Omega)$ equipped with the weak-$*$-topology (the lower semicontinuity was established by Reshetnyak~\cite{RESHETNYAK68}).

Returning to the original question, one can then investigate whether or not these generalized minimizers belong to $u_0 + W^{1,1}(\Omega)$ (which amounts to excluding the singular measure derivative and to show attainment of the boundary values). However, this is in general not the case, meaning that in fact no minimizer in the space $u_0 + W^{1,1}(\Omega)$ might exist. This situation has indeed been studied in full detail for the minimal surface equation, and a by now classical result by Miranda~\cite{MIRANDA71} states that for any locally pseudoconvex domain~$\Omega$ and any continuous prescribed boundary values~$u_0$ a unique minimizer $u \in \mathcal{C}(\bar{\Omega}) \cap \mathcal{C}^2(\Omega)$ exists. Moreover, this result is sharp, and neither the convexity assumption nor the regularity assumption on $u_0$ can be considerably weakened in order to guarantee the existence of minimizers in $u_0 + W^{1,1}(\Omega)$.

Concerning higher regularity of such generalized minimizers, let us mention briefly that Bildhauer and Fuchs~\cite{BILFUC02b,BILDHAUER02,BILDHAUER03,BILDHAUERBUCH} have investigated, also for the vectorial case, the role of the so-called $\mu$-ellipticity condition, which quantifies the degeneration of the second order derivatives of the integrand (which in our case would basically mean $F''(s) \geq c s^{-\mu}$ for $s \geq 1$). Here, mild degeneration $\mu \in (1,3)$ (plus some additional assumptions, as radial structure of the integrand) still allows to prove that the generalized minimizers introduced before are actually of class $\mathcal{C}^{1}_\loc(\Omega)$ (see \cite[Theorem~2.7]{BILDHAUER02}, but also \cite[Theorem~B]{MARPAPI06} and \cite[Theorem~1.3]{BECSCH15}), while in the limit case with degeneration $\mu = 3$ (as for the area functional) it is still possible to show that they belong to $W^{1,1}(\Omega)$, with improved $L \log L$ integrability of the gradients (see \cite[Theorem~2.5]{BILDHAUER02} and \cite[Corollary~1.13]{BECSCH13}). However, these paper focus primarily on the regularity of generalized minimizers, and attainment of the prescribed boundary values~$u_0$ is in fact not expected in this general setting, as highlighted above.

In the present paper, we proceed with an alternative (and also very classical) strategy, which directly addresses the minimization problem (or equivalently Dirichlet problem), without passing through the relaxed formulation. Indeed, the goal is to characterize integrands~$F$ (or equivalently coefficient functions~$a$), in terms of properties of~$F$ only, such that the minimization or Dirichlet problem admits a solution in $u_0 + W^{1,1}(\Omega)$, for any (regular) domain~$\Omega$ and boundary values~$u_0$. In fact, necessary and sufficient conditions for the solvability of the Dirichlet problem in the planar case~$d=2$ for the second order elliptic equation
\begin{equation}
\label{eqn_Bernstein}
  {\mathcal A}(Du) \cdot \nabla^2 u = 0
\end{equation}
(with the convention ${\mathcal A}(z) \cdot \tilde{z} \coloneqq  \sum_{i,j=1}^d {\mathcal A}_{ij}(z) \tilde{z}_{ij}$) was already investigated by Bernstein~\cite{BERNSTEIN12}, in terms of the Bernstein genre~$g$ defined via the validity of
\begin{equation*}
 c |z|^{2-g} \leq \frac{{\mathcal A}(z) \cdot ( z \otimes z) }{\tr{{\mathcal A}(z)}} \leq C  |z|^{2-g} \qquad \textrm{for all } z \in \mathbb{R}^d \setminus B_1(\b0)
\end{equation*}
for some constants $c \leq C$ (if well-defined), and then generalized by Leray~\cite{LERAY39}. One can easily calculate that the Bernstein genre of the equation for the prototypic coefficients~$a_p$ from~\eqref{proto_example} is given by~$p$ (hence, the minimal surface equation is of Bernstein genre~$2$). Bernstein's discovery in~\cite{BERNSTEIN12} was that the question of the solvability of the Dirichlet problem splits into the two classes of genre $g \leq 1$ and $g > 1$. While for the first class the Dirichlet problem is in general solvable, one needs to impose curvature restriction on the second class (as the convexity conditioned mentioned before in Miranda's result). An extension to the higher-dimensional case $d \geq 2$ and a systematic treatment of more general non-uniformly elliptic equations of the form~\eqref{eqn_Bernstein} was given later by Serrin~\cite{SERRIN69}. He defined the equation to be \emph{regularly elliptic} if
\begin{equation}
\label{cond_reg_elliptic_1}
 \frac{{\mathcal A}(z) \cdot ( z \otimes z) }{\tr{{\mathcal A}(z)}} \geq \Phi(|z|)  \qquad \textrm{for all } z \in \mathbb{R}^d \setminus B_1(\b0)
\end{equation}
holds for some increasing function $\Phi \in \mathcal{C}(\mathbb{R}^+)$ satisfying
\begin{equation}
\label{cond_reg_elliptic_2}
  \int_{1}^\infty \Phi(t) t^{-2} \dt = \infty.
\end{equation}
Obviously, equations with a well-defined Bernstein genre~$g$ are regularly elliptic if and only if $g \leq1$. Furthermore, our specific equation~\eqref{P1} is regularly elliptic in particular if the left-hand side in~\eqref{cond_reg_elliptic_1} is increasing and if we have in addition
\begin{equation*}
 \int_1^\infty \frac{a'(t) t + a(t)}{a'(t)t+d a(t)} \dt = \int_1^\infty \frac{F''(t)}{a'(t)t+d a(t)} \dt=\infty.
\end{equation*}
The relevance of the structure condition~\eqref{cond_reg_elliptic_2} consists in the fact that it implies a priori estimates for the gradient of solutions on the boundary, via the construction of so-called \emph{global barriers functions}, and interior a priori bounds follow in turn. Since the existence of solutions to the Dirichlet problem can be reduced to the proof of a priori estimates, Serrin obtained as a consequence that regularly elliptic Dirichlet problems allow for a solution, for arbitrary (regular) domains and prescribed boundary values (while for non-regularly elliptic equations it is in general again necessary to impose restrictions on the domain, e.g. on the curvatures of the boundary~$\partial \Omega$).

The main result of the present article concerns the solvability of the Dirichlet problem in $u_0 + W^{1,1}_0(\Omega)$ and higher regularity of the solution in the sense of Lipschitz continuity, which we prove simultaneously, following the strategy of Serrin's work~\cite{SERRIN69}. We here work under a radial structure condition, which allows for an easier construction of barriers, and we work under a bounded oscillation assumption, which in some sense acts as a substitute for the monotonicity assumption of the function~$\Phi$ introduced above. We then obtain a necessary and sufficient condition for the solvability of the Dirichlet problem, in terms of an integral condition as in~\eqref{cond_reg_elliptic_2}, and the precise statement is the following:

\begin{Theorem}\label{T1}
Let $F\in \mathcal{C}^2(\mathbb{R}^+)$ be a strictly convex function with $\lim_{s \to 0} F'(s) = 0$ which satisfies, for some constants $C_1,C_2 >0$,
\begin{equation}
\begin{aligned}
C_1s -C_2 &\le F(s)\le C_2(1+s) &&\textrm{ for all } s\in \mathbb{R}^+,\\
\frac{F''(s)}{F''(t)}&\le C_2  &&\textrm{ for all } s\ge 1 \textrm{ and } t\in [s/2,2s].
\end{aligned}\label{A1}
\end{equation}
Then the following statements are equivalent:
\begin{enumerate}[font=\normalfont, label=(\roman{*}), ref=(\roman{*})]
 \item For arbitrary domains $\Omega$ of class $\mathcal{C}^{1}$ satisfying an exterior ball condition and arbitrary prescribed boundary values $u_0 \in \mathcal{C}^{1,1}(\overline{\Omega})$ there exists a unique function $u\in \mathcal{C}^{0,1}(\overline{\Omega})$ solving~\eqref{P1}.
 \item The function $F$ satisfies
\begin{equation}
\int_1^{\infty} t F''(t) \dt =\infty.\label{A2}
\end{equation}
\end{enumerate}
\end{Theorem}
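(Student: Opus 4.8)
\emph{Strategy.} I would prove the two implications by quite different means. For $(\mathrm{ii})\Rightarrow(\mathrm{i})$ I would follow the barrier method of Serrin~\cite{SERRIN69}: regularize the problem, derive a priori $\mathcal C^{0,1}$-bounds that are uniform in the regularization parameter, and pass to the limit. For $\neg(\mathrm{ii})\Rightarrow\neg(\mathrm{i})$ I would exhibit an explicit radially symmetric configuration in which any solution would violate a bound that~\eqref{A2} is exactly designed to deliver. Throughout, $M:=\lim_{s\to\infty}F'(s)\in(0,\infty)$ by~\eqref{A1}, and one may assume $d\ge 2$.

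\emph{The implication $(\mathrm{ii})\Rightarrow(\mathrm{i})$.} Fix $\Omega$ and $u_0$ as in~(i), put $F_\varepsilon(s):=F(s)+\tfrac{\varepsilon}{2}s^2$ and $a_\varepsilon(s):=a(s)+\varepsilon$ for $\varepsilon\in(0,1]$; the corresponding functional is uniformly convex with superlinear growth, so there is a unique minimizer $u_\varepsilon\in u_0+W^{1,2}_0(\Omega)$, smooth inside $\Omega$, solving $\diver(a_\varepsilon(|\nabla u_\varepsilon|)\nabla u_\varepsilon)=0$. Comparison with constants gives $\|u_\varepsilon\|_{L^\infty}\le\|u_0\|_{L^\infty}$, and the heart of the matter is an $\varepsilon$-independent bound $\|\nabla u_\varepsilon\|_{L^\infty(\Omega)}\le K$. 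For the boundary part, fix $x_0\in\partial\Omega$ with exterior ball $B_R(y)$ and set $\varrho(x):=|x-y|-R$, a smooth nonnegative function near $x_0$ with $\varrho(x_0)=0$, $|\nabla\varrho|=1$ and $\Delta\varrho=\tfrac{d-1}{|x-y|}\le\tfrac{d-1}{R}$. I would use barriers $w^{\pm}(x):=u_0(x)\pm\psi_\varepsilon(\varrho(x))$ on $N:=\Omega\cap\{\varrho<d_0\}$, with $\psi_\varepsilon:[0,d_0]\to\mathbb R^+$ increasing, concave, $\psi_\varepsilon(0)=0$: since $\psi_\varepsilon\ge0$ and $u_\varepsilon=u_0$ on $\partial\Omega$, one has $w^-\le u_\varepsilon\le w^+$ on $\partial N$ once $\psi_\varepsilon(d_0)\ge 2\|u_0\|_{L^\infty}$, and $w^+$ is a supersolution (the lower-order terms produced by the $\mathcal C^{1,1}$-function $u_0$ being absorbed by slightly enlarging the right-hand side) once $-\psi_\varepsilon''(t)F_\varepsilon''(\psi_\varepsilon'(t))\ge\tfrac{d-1}{R}F_\varepsilon'(\psi_\varepsilon'(t))$. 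Choosing equality and integrating gives $F_\varepsilon'(\psi_\varepsilon'(t))=F_\varepsilon'(L)e^{-(d-1)t/R}$ with $L:=\psi_\varepsilon'(0)$, so after substituting $s=\psi_\varepsilon'(t)$,
\begin{equation*}
 \psi_\varepsilon(d_0)=\frac{R}{d-1}\int_{s_\varepsilon}^{L}\frac{s\,F_\varepsilon''(s)}{F_\varepsilon'(s)}\ds,\qquad s_\varepsilon:=(F_\varepsilon')^{-1}\!\big(F_\varepsilon'(L)e^{-(d-1)d_0/R}\big).
\end{equation*}
Letting $\varepsilon\to0$ with $L$ fixed, $F_\varepsilon',F_\varepsilon''\to F',F''$ locally uniformly and $s_\varepsilon\to s_0:=(F')^{-1}(Me^{-(d-1)d_0/R})\in(0,\infty)$, and since $F'\le M$,
\begin{equation*}
 \lim_{L\to\infty}\frac{R}{d-1}\int_{s_0}^{L}\frac{s\,F''(s)}{F'(s)}\ds\ \ge\ \frac{R}{(d-1)M}\int_{s_0}^{\infty}s\,F''(s)\ds\ =\ \infty
\end{equation*}
by~\eqref{A2} (the doubling property in~\eqref{A1} being what makes this estimate effective on dyadic scales). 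Hence a single $L$, independent of $\varepsilon\in(0,1]$, may be chosen with $\psi_\varepsilon(d_0)\ge 2\|u_0\|_{L^\infty}$, and the comparison principle together with $\psi_\varepsilon'\le L$ and $\varrho(x)\le|x-x_0|$ yields $|u_\varepsilon(x)-u_\varepsilon(x_0)|\le(\|\nabla u_0\|_{L^\infty}+L)|x-x_0|$ near $x_0$, an $\varepsilon$-uniform gradient bound along $\partial\Omega$. The interior part is classical: each $\partial_k u_\varepsilon$ solves a linear uniformly elliptic equation, so $\sup_\Omega|\nabla u_\varepsilon|=\sup_{\partial\Omega}|\nabla u_\varepsilon|\le K$ by the maximum principle. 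Finally $(u_\varepsilon)$ is bounded in $\mathcal C^{0,1}(\overline\Omega)$; extracting $u_\varepsilon\to u$ in $\mathcal C(\overline\Omega)$ with $\nabla u_\varepsilon\stackrel{*}{\rightharpoonup}\nabla u$ in $L^\infty(\Omega)$ gives $u\in\mathcal C^{0,1}(\overline\Omega)$, $u=u_0$ on $\partial\Omega$, and passing to the limit in $\int F_\varepsilon(|\nabla u_\varepsilon|)\dx\le\int F_\varepsilon(|\nabla(u_0+\varphi)|)\dx$ for $\varphi\in\mathcal D(\Omega)$, using weak-$*$ lower semicontinuity of $v\mapsto\int F(|\nabla v|)\dx$, shows that $u$ satisfies~\eqref{min} and hence solves~\eqref{P1}; uniqueness follows from the strict monotonicity of $z\mapsto a(|z|)z$, equivalently the strict convexity of $z\mapsto F(|z|)$ (which holds since $F$ is strictly convex and increasing).

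\emph{The implication $\neg(\mathrm{ii})\Rightarrow\neg(\mathrm{i})$.} Assume $\int_1^\infty sF''(s)\ds<\infty$, fix $\rho\in(0,1)$, and take the smooth (hence $\mathcal C^1$ with exterior ball condition) annulus $\Omega:=B_1(\b0)\setminus\overline{B_\rho(\b0)}\subset\mathbb R^d$ together with any radial $u_0\in\mathcal C^{1,1}(\overline\Omega)$ with $u_0\equiv 0$ on $\partial B_1$ and $u_0\equiv M_0$ on $\partial B_\rho$, where $M_0>0$ is to be fixed. If $u\in\mathcal C^{0,1}(\overline\Omega)$ solved~\eqref{P1}, then by uniqueness $u$ is rotation invariant, $u(x)=\psi(|x|)$ with $\psi$ Lipschitz, $\psi(\rho)=M_0$, $\psi(1)=0$; testing the weak formulation with radial functions gives $r^{d-1}F'(|\psi'(r)|)\equiv c$ for a constant $c>0$ (the flux being of constant sign, $\psi$ is decreasing), hence $|\psi'(r)|=(F')^{-1}(c\,r^{1-d})$, which — since $\psi'$ is bounded and $F'<M$ — forces $c<\rho^{d-1}M$ and therefore
\begin{equation*}
 M_0=\int_\rho^1|\psi'(r)|\dr=\int_\rho^1(F')^{-1}\!\big(c\,r^{1-d}\big)\dr\ <\ \int_\rho^1(F')^{-1}\!\big(M(\rho/r)^{d-1}\big)\dr=:C_0 .
\end{equation*}
But $C_0<\infty$: substituting $s=(F')^{-1}(M(\rho/r)^{d-1})$ turns the last integral into $\tfrac{\rho}{d-1}\int_{s_1}^{\infty}s\,(M/F'(s))^{1/(d-1)}\,F''(s)/F'(s)\ds$ with $s_1=(F')^{-1}(M\rho^{d-1})$, and on $[s_1,\infty)$ the factors $(M/F'(s))^{1/(d-1)}$ and $1/F'(s)$ are bounded, so $C_0$ is comparable to $\int_{s_1}^{\infty}sF''(s)\ds<\infty$. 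Choosing $M_0>C_0$, the problem~\eqref{P1} has no Lipschitz solution, so~(i) fails.

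\emph{Main obstacle.} The delicate step is the $\varepsilon$-uniform boundary gradient estimate: one must produce, for the regularized problems, barriers whose height attains the fixed level $2\|u_0\|_{L^\infty}$ dictated by the data while keeping the boundary slope $\psi_\varepsilon'(0)$ bounded independently of $\varepsilon$, and it is exactly here that~\eqref{A2} together with the doubling property in~\eqref{A1} enters essentially; verifying that the lower-order terms induced by $u_0$ in the barrier inequality are genuinely harmless uniformly in $\varepsilon$ (using that $\psi_\varepsilon'$ stays in a fixed compact subset of $(0,\infty)$ on which $F_\varepsilon''$ is bounded below) is the main technical chore. The converse implication is comparatively soft, the only real computation being the identification of $C_0$ with a finite quantity precisely when $\neg\eqref{A2}$ holds.
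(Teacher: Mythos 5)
Your proposal is correct in spirit and its two implications line up with the structure of the paper's proof, but the barrier construction in the existence direction follows a genuinely different route. For $\neg(\mathrm{ii})\Rightarrow\neg(\mathrm{i})$, you adapt Finn's annulus example with constant boundary values, derive the first integral $r^{d-1}F'(|\psi'|)\equiv \mathrm{const}$, and show via the substitution $s=(F')^{-1}(\cdot)$ that the height $M_0$ is a priori bounded by an expression comparable to $\int_1^\infty sF''(s)\,\mathrm{d}s$; this is essentially identical to the paper's Section~2 argument (which works on $B_2\setminus B_1$ instead of $B_1\setminus B_\rho$). For $(\mathrm{ii})\Rightarrow(\mathrm{i})$, you also regularize by adding $\tfrac{\varepsilon}{2}|\nabla u|^2$ and reduce the uniform Lipschitz bound to a uniform boundary gradient estimate via the maximum principle for $|\nabla u_\varepsilon|$; that part coincides with the paper (Subsection~4.1). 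Where you diverge is in the barrier itself: you use $w^\pm = u_0\pm\psi_\varepsilon(\varrho)$, $\varrho$ the distance to the exterior ball, and choose $\psi_\varepsilon$ by solving the scalar ODE $-\psi_\varepsilon''F_\varepsilon''(\psi_\varepsilon')=\tfrac{d-1}{R}F_\varepsilon'(\psi_\varepsilon')$; condition~\eqref{A2} together with $F'\le 1$ (after normalization) then makes $\psi_\varepsilon(d_0)$ as large as one wishes while keeping the slope $L$ finite. The paper instead builds the barrier out of an \emph{exact} radial solution $\omega^\delta_{r_0}$ of the problem associated to a modified integrand $F_g$ (Lemma~\ref{L2}), where $g\to 0$ decreases the convexity at infinity, and then adds the first-order Taylor polynomial of $u_0$; the slack introduced by $g$ is exactly what lets them verify that the sum is a supersolution for the original $a$ despite the shift $\nabla\omega^\delta_{r_0}\mapsto\nabla\omega^\delta_{r_0}+\nabla u_0(\bx_0)$ (the term $\tilde L_2$ in the proof of Lemma~\ref{L4}). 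Your method trades that structural trick for the classical Serrin-type ODE.

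The one genuine gap in your proposal is precisely the step you flag as the ``main technical chore'': you assert that the lower-order terms produced by the $\mathcal C^{1,1}$-function $u_0$ can be absorbed ``by slightly enlarging the right-hand side,'' but with the ODE $-\psi_\varepsilon''F_\varepsilon''=\tfrac{d-1}{R}F_\varepsilon'$ this is not automatic and is exactly where the paper's modified integrand earns its keep. In your ansatz $w^+=u_0+\psi_\varepsilon(\varrho)$, the supersolution inequality generates terms of the type $a_\varepsilon(|\nabla w|)\Delta u_0$ and cross terms involving $a_\varepsilon'(|\nabla w|)\nabla^2 u_0:(\nabla w\otimes\nabla w)/|\nabla w|$, and these are of the same order $F_\varepsilon''(\psi_\varepsilon')\cdot\|u_0\|_{2,\infty}$ as the ``good'' term $-\psi_\varepsilon''F_\varepsilon''(\psi_\varepsilon')$. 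One must therefore replace the ODE by, say, $-\psi_\varepsilon''F_\varepsilon''(\psi_\varepsilon')\ge \tfrac{d-1}{R}F_\varepsilon'(\psi_\varepsilon')+C\|u_0\|_{2,\infty}$ (with $C$ depending only on the data), and then redo the substitution $s=\psi_\varepsilon'(t)$ to verify that $\psi_\varepsilon(d_0)=\int \tfrac{sF_\varepsilon''(s)}{\frac{d-1}{R}F_\varepsilon'(s)+C\|u_0\|_{2,\infty}}\,\mathrm{d}s$ still diverges as $L\to\infty$ uniformly in small $\varepsilon$ — which it does, by~\eqref{A2} and the boundedness of $F_\varepsilon'$ on the relevant compact range of slopes — but you would need to carry this out, together with the check that $\psi_\varepsilon'$ stays in a compact subset of $(0,\infty)$ where $F_\varepsilon''$ is bounded below uniformly in $\varepsilon$. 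Once this is done carefully, your route is a valid and in fact somewhat more elementary alternative to the paper's construction, avoiding Lemmas~\ref{L2}--\ref{L4} at the expense of a more hands-on ODE estimate.
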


The proof of Theorem~\ref{T1} will be divided into two parts. In Section~\ref{nonexistence} we first deal with the failure of the existence of Lipschitz solutions to some regular boundary value problem to~\eqref{P1} if the assumption~\eqref{A2} is not satisfied, by an adaptation of an example constructed by Finn~\cite{FINN65} for the minimal surface equation. We here emphasize that this non-existence result concerns the \emph{general} solvability of the Dirichlet problem, and in fact, some restricted solvability results, for specific (non-convex) domains and boundary values, might still be true even if~\eqref{A2} is violated, see for instance \cite[Theorem~2.1]{BULMALRAJWAL15}. The rest of the paper is then devoted to the proof of existence of Lipschitz solutions if assumption~\eqref{A2} holds. To this end, we provide in Section~\ref{Auxiliary} some auxiliary lemmata, before proceeding in Section~\ref{main_proof} to the main proof, which consists of a number of steps. First, since the existence of solutions cannot be addressed directly, we perform an approximation of the original Dirichlet problem by a family of Dirichlet problems exhibiting a quadratic growth condition (thus, admitting solutions in $u_0 + W^{1,2}_0(\Omega)$ by the direct method). Then, the passage to the limit yields a Lipschitz solution with boundary values~$u_0$ (and not only a generalized solution) if we can show uniform $W^{1,\infty}(\Omega)$ estimates for the solutions of the approximate problems. This is achieved by the construction of appropriate barrier functions, in the tradition of Serrin's paper~\cite{SERRIN69}, and concludes the proof of Theorem~\ref{T1}. Finally, it is worth to mention that the integral condition~\eqref{A2} is trivially satisfied (and hence that every Dirichlet problem is solvable) if~$F$ satisfies the aforementioned $\mu$-ellipticity condition studied by Bildhauer and Fuchs for some $\mu \in (1,2]$, thus providing a connection to the first approach to the minimization problem via relaxation. Moreover, we have recovered that the Dirichlet problem in the prototypic example with coefficients~\eqref{proto_example} is in general solvable for arbitrary regular domains and boundary values if and only if $p \leq 1$ holds.

\begin{Rem}
Concerning the assumptions on the domain and the function~$F$, let us note:
\begin{enumerate}[font=\normalfont, label=(\roman{*}), ref=(\roman{*})]
 \item A domain~$\Omega$ satisfies the exterior ball condition if there exists a number $r_0>0$ such that for every point~$\bx_0 \in \partial \Omega$ there is a ball $B_{r_0}(\tilde{\bx}_0)$ with $\overline{B_{r_0}(\tilde{\bx}_0)} \cap \overline{\Omega} = \{\bx_0\}$. Convexity or $\mathcal{C}^{1,1}$-regularity of the domain are sufficient for the exterior ball condition, see e.g.~\cite[Theorem 1.9.]{DALPHIN14}, thus, Theorem~\ref{T1} holds in particular for all convex domains of class $\mathcal{C}^{1}$ and for arbitrary domains of class $\mathcal{C}^{1,1}$.
 \item For simplicity, but without loss of generality, we can restrict ourselves in the whole paper to functions~$F$ which satisfy
 \begin{equation}
  F(0)=0 \quad \textrm{and} \quad \lim_{s\to \infty} \frac{F(s)}{s}=\lim_{s\to \infty}F'(s)=1, \label{A3}
 \end{equation}
 since the Dirichlet problem is invariant under addition and multiplication by a constant to $F$. Notice for the second relation that   the function~$s \mapsto F'(s)$ is monotonically increasing by convexity of~$F$ and therefore has a limit as $s\to \infty$. Moreover, strict positivity and finiteness of this limit follow from~\eqref{A1} and L'H\^{o}pital's rule, which also shows that the two limits in~\eqref{A3} coincide.
 \item With a change of variables \textup{(}and the normalization from the previous remark\textup{)}, we have
 \begin{equation*}
  \int_1^{\infty} t F''(t) \dt = \int_{F'(1)}^1 (F')^{-1}(s) \ds.
 \end{equation*}
 Since the latter integral can be rewritten via the conjugate function
 \begin{equation*}
 F^*(s^*) \coloneqq \sup_{s \in \mathbb{R}^+} \big\{s s^* - F(s)\big\} \qquad \textrm{for } s^* \in \mathbb{R}^+
 \end{equation*}
 \textup{(}which appears in the dual formulation of the Dirichlet problem in the sense of convex analysis\textup{)} as $\lim_{s^* \to 1} F^*(s^*) - F^*(F'(1))$, we observe that condition~\eqref{A2} is satisfied if and only if the conjugate function~$F^*$ explodes when approaching the boundary of its domain.
\end{enumerate}
\end{Rem}

Let us note that the result of Theorem~\ref{T1} could in fact be extended to less regular settings. One possibility is to consider convex functions~$F$ which are of class $\mathcal{C}^2$ only for large value. Since precisely only large gradients need to be avoided (uniformly) for the solutions to the approximative problems, such an asymptotic condition is in general sufficient (see e.g.~\cite[Theorem~1.2]{BECSCH15} for a related result). Moreover, if one is interested only in the existence of solutions in $u_0 + W^{1,1}_0(\Omega)$ (and not necessarily Lipschitz), one might work on domains which are only piece-wise of class~$\mathcal{C}^{1,1}$ and for more general (not Lipschitz) boundary values $u_0$.

We conclude the introduction with some comments on the notation used throughout the paper. For a set~$S$ in~$\mathbb{R}^d$ we write~$\partial S$ for its topological boundary and~$\overline{S}$ for its closure. Furthermore, for points in $\mathbb{R}^d$ we use bold letters (like $\bx$ and $\bx_0$), and the open ball in $\mathbb{R}^d$ wither center~$\bx_0$ and radius~$r$ is denoted by $B_r(\bx_0)$. Concerning function spaces, we work with the standard Lebesgue spaces~$L^p$ and Sobolev spaces~$W^{1,p}$, for $p \in [1,\infty]$, and we abbreviate the respective norms by $\| \cdot \|_{p}$ and $\| \cdot \|_{1,p}$, when the domain of reference is clear from the context.

\section{Non-existence of Lipschitz minimizers}
\label{nonexistence}

We first show the necessity of assumption~\eqref{A2} for the existence of Lipschitz minimizers to any regular boundary value problem to~\eqref{P1}. In what follows, we provide a simple example of a regular domain~$\Omega$ and regular boundary values~$u_0$ for which no Lipschitz solution (and in fact also no $W^{1,1}$ solution) to~\eqref{P1} exists. The construction is motivated by a well-known counterexample due to Finn~\cite{FINN65} for the minimal surface equation with $a_2(s)=(1+s^2)^{-\frac12}$, and we shall also here work on the annulus $\Omega\coloneqq  B_2(\b0)\setminus B_1(\b0)$ and with boundary values~$u_0$ which are constant on every connected component of~$\partial \Omega$, that is, we consider
\begin{equation}\label{P1ce}
\begin{aligned}
-\diver \Big( F'(|\nabla u|) \frac{\nabla u}{|\nabla u|} \Big) &= 0 \qquad &&\text{in}~ B_2(\b0) \setminus B_1(\b0), \\
 u&= 0 &&\text{on}~ \partial B_1(\b0), \\
 u&= M &&\text{on}~ \partial B_2(\b0),
\end{aligned}
\end{equation}
for some positive number $M\in \mathbb{R}^+$ (to be specified later). Moreover, we take $F \in \mathcal{C}^2(\mathbb{R}^+)$ as in Theorem~\ref{T1} satisfying
\begin{equation}
\int_1^{\infty} t F''(t) \dt = C_0 \label{not_A2}
\end{equation}
for some positive constant $C_0$ and, without loss of generality, also the normalization assumption~\eqref{A3}. Thanks to the strict convexity of~$F$, the monotonicity of~$F'$ and the radial symmetry of both the domain and the prescribed boundary values, the Lipschitz (or even $W^{1,1}$) solution to~\eqref{P1ce}, if it exists, is radially symmetric and can consequently be written as $u(\bx)=U(|\bx|)$ for all $\bx \in \Omega$ and some (Lipschitz) function $U \colon [1,2] \to \mathbb{R}$ with $U(1) = 0$ and $U(2)=M$. Thus, we also have $\nabla u(\bx) = U'(|\bx|) \frac{\bx}{|\bx|}$ for almost all $\bx \in \Omega$. In order to find a representation formula of $U$, we take an arbitrary function $\Phi \in \mathcal{D}([1,2])$ and extend it radially to a function $\varphi \in \mathcal{D}(\Omega)$ by setting $\varphi(\bx) \coloneqq \Phi(|\bx|)$. Then we test the weak formulation of~\eqref{P1ce} with $\varphi$ and find, by $\nabla \varphi(\bx) = \Phi'(|\bx|)\frac{\bx}{|\bx|}$, the transformation to polar coordinates and the radial symmetry of both functions~$U$ and~$\Phi$, the identity
\begin{align*}
0 &=\int_{\Omega} F'(|\nabla u(\bx)|) \frac{\nabla u(\bx)}{|\nabla u(\bx)|} \cdot \nabla \varphi(\bx) \dbx \\
  &=\int_{\Omega} F'(|U'(|\bx|)|) \frac{U'(|\bx|)}{|U'(|\bx|)|} \Phi'(|\bx|) \dbx \\
  &= d \omega_d \int_1^2 r^{d-1} F'(|U'(r)|) \frac{U'(r)}{|U'(r)|} \Phi'(r) \dr ,
\end{align*}
with $\omega_d$ the Lebesgue measure of the unit ball in $\mathbb{R}^d$ (and hence $d \omega_d$ the $(d-1)$-dimensional Hausdorff measure of the unit sphere in $\mathbb{R}^d$). Since $\Phi \in \mathcal{D}([1,2])$ was arbitrary, we deduce in a first step
\begin{equation*}
F'(|U'(r)|) \frac{U'(r)}{|U'(r)|} = \dfrac{c}{r^{d-1}}
\end{equation*}
for some constant $c$ and all $r \in [1,2]$. By assumptions on $F$, we next observe that no sign change of $U'$ may occur, hence, $U'$ is positive everywhere in $[1,2]$ and we have indeed
\begin{equation*}
1 > F'(U'(r)) = \dfrac{c}{r^{d-1}},
\end{equation*}
hence also $c \in (0,1)$. After inverting the last identity we can integrate (keeping in mind the boundary condition $U(1) = 0$) and find the desired representation formula
\begin{equation*}
U(r) = \int_1^{r} (F')^{-1} \Big(\dfrac{c}{s^{d-1}}\Big) \ds
\end{equation*}
for all $r \in [1,2]$. In turn, with the substitution $c/s^{d-1}= z$, that is, $s=c^{\frac{1}{d-1}}z^{\frac{1}{1-d}}$, we obtain the following upper bound on~$U$:
\begin{align*}
U(r) &= \int_c^{c/r^{d-1}} \frac{c^{\frac{1}{d-1}}}{1-d} z^{\frac{d}{1-d}} (F')^{-1}(z) \, \text{d}z \\
&= \frac{c^{\frac{1}{d-1}}}{d-1} \int_{c/r^{d-1}}^c  z^{\frac{d}{1-d}} (F')^{-1}(z) \, \text{d}z \\
&\le \frac{2^d}{c(d-1)} \int_{c/r^{d-1}}^c (F')^{-1}(z) \, \text{d}z.
\end{align*}
This provides indeed a nontrivial upper bound, as can be seen by a case distinction between small and large values of~$c$, in order to estimate the integral appearing on the right-hand side. In the case $0 < c \leq F'(1) <1$, we find by monotonicity of~$(F')^{-1}$
\begin{equation*}
 \int_{c/r^{d-1}}^c (F')^{-1}(z) \, \text{d}z \leq c (1 - r^{1-d}) < c,
\end{equation*}
while in the opposite case $0 < F'(1) < c < 1$ the change of variables with $z=F'(t)$ combined with~\eqref{not_A2} yields
\begin{align*}
\int_{c/r^{d-1}}^c (F')^{-1}(z) \, \text{d}z & = \int_{(F')^{-1}(c/r^{d-1})}^{(F')^{-1}(c)} t F''(t) \dt \\
  & \leq \int_0^{\infty}  t F''(t) \dt \leq F'(1) + C_0 .
\end{align*}
In conclusion, for all $r\in [1,2]$, we have derived the explicit upper bound
\begin{equation*}
U(r)\le \frac{2^d}{d-1} \Big( 1 +  \frac{C_0}{F'(1)}\Big),
\end{equation*}
which is a contradiction to $U(2)=M$ for any given $M\in \mathbb{R}^+$. Thus, we have just proven that if~\eqref{A2} does not hold, then we can find a smooth domain~$\Omega$ and smooth boundary values~$u_0$ such that all assumptions of Theorem~\eqref{T1} are satisfied, and such that no Lipschitz solution to problem~\eqref{P1} exists. \qed

\begin{Rem}
More subtle non-existence results can be obtained, similarly as in the case of the minimal surface equation. In particular, the class of regular domains which allow for non-existence results can be investigated. Based on the previous construction and a comparison principle, one can in fact show that for every non-pseudoconvex  regular domain~$\Omega$ there exists smooth prescribed boundary values such that no Lipschitz solution to the Dirichlet problem~\eqref{P1} exists.
\end{Rem}

\section{Auxiliary lemmata}
\label{Auxiliary}

In order to proceed to the proof of the second implication of Theorem~\ref{T1}, we first derive some auxiliary algebraic inequalities.

\begin{Lemma}\label{L1}
Let $F\in \mathcal{C}^2(\mathbb{R}^+)$ be a strictly convex function with $\lim_{s \to 0} F'(s) = 0$ which satisfies~\eqref{A1},~\eqref{A2} and~\eqref{A3}, and let $a \in  \mathcal{C}^1(\mathbb{R}^+)$ be given by~\eqref{min2}. Then, there hold
\begin{align}
 C_1s - C_2 \leq sF'(s) & \leq s \label{R1} \qquad \text{for every $s > 0$}, \\
\lim_{s\to \infty} sF''(s) & = 0,\label{R2}\\
\lim_{s\to \infty} s^2 a'(s) & = -1.\label{R3}
\end{align}
\end{Lemma}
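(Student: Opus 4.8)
The plan is to deduce all three relations from convexity of $F$ together with the normalization~\eqref{A3}; only~\eqref{R2} will genuinely use the bounded-oscillation bound in~\eqref{A1}, and the integral condition~\eqref{A2} plays no role here. Throughout I would use that convexity of $F$ gives $F''\ge 0$ and $F'$ non-decreasing, and that by~\eqref{A3} one has $\lim_{s\to\infty}F'(s)=1$; since $F'$ is non-decreasing this forces $F'\le 1$ on all of $\mathbb{R}^+$.

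For~\eqref{R1}, the upper bound $sF'(s)\le s$ is then immediate from $F'\le 1$. For the lower bound I would apply the tangent-line inequality of convexity at the point $s$, evaluated at $0$, namely $F(0)\ge F(s)+F'(s)(0-s)$; combined with $F(0)=0$ from~\eqref{A3} this yields $sF'(s)\ge F(s)$, and~\eqref{A1} then gives $F(s)\ge C_1s-C_2$.

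For~\eqref{R2}, I would first record that $F''\in L^1(1,\infty)$, since $\int_1^s F''(t)\dt = F'(s)-F'(1)\to 1-F'(1)$ by~\eqref{A3}. Then I would argue by contradiction: since $sF''(s)\ge 0$, if~\eqref{R2} fails there exist $\varepsilon>0$ and $s_n\to\infty$, which we may thin so that $s_1\ge 2$ and $s_{n+1}>2s_n$, with $s_nF''(s_n)\ge\varepsilon$ for all $n$. The oscillation bound in~\eqref{A1}, applied with $s=s_n$, gives $F''(t)\ge C_2^{-1}F''(s_n)$ for every $t\in[s_n/2,2s_n]$, hence
\[
 \int_{s_n/2}^{s_n} F''(t)\dt \;\ge\; \frac{s_n}{2}\cdot\frac{F''(s_n)}{C_2} \;\ge\; \frac{\varepsilon}{2C_2}.
\]
As the intervals $(s_n/2,s_n)$ are pairwise disjoint and contained in $(1,\infty)$, summing over $n$ forces $\int_1^\infty F''(t)\dt=\infty$, contradicting integrability; hence $\lim_{s\to\infty}sF''(s)=0$. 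I expect this to be the main obstacle — not because it is technically hard, but because it is the one point where the bounded-oscillation hypothesis is essential, precisely in order to upgrade pointwise largeness of $sF''(s)$ into largeness of $\int F''$ over an interval of definite relative length.

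For~\eqref{R3}, from $F'(s)=a(s)s$ one differentiates to obtain $a'(s)=F''(s)/s-F'(s)/s^2$, that is, $s^2a'(s)=sF''(s)-F'(s)$; letting $s\to\infty$ and invoking~\eqref{R2} together with $F'(s)\to 1$ from~\eqref{A3} then gives $s^2a'(s)\to -1$.
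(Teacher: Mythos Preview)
Your proof is correct and follows essentially the same approach as the paper's. The only minor difference is in~\eqref{R2}: the paper observes directly that the oscillation bound yields $sF''(s)\le C_2\int_s^{2s}F''(t)\dt\to 0$ by integrability of~$F''$, whereas you reach the same conclusion via a contradiction argument with a sequence --- equivalent in content, just slightly more roundabout.
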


\begin{proof}
We start by observing some simple consequences of the strict convexity of~$F$. As~$F'$ is monotonically increasing, assumption~\eqref{A1} gives
\begin{equation*}
C_1s - C_2 \le F(s) = \int_0^s F'(r)\dr \le \int_0^s F'(s)\dr =sF'(s),
\end{equation*}
which is the first inequality in~\eqref{R1}. We further observe that the assumptions $\lim_{s \to 0} F'(s) = 0$ and $\lim_{s \to \infty} F'(s) = 1$ yield immediately $0<F'(s)<1$ for all $s \in (0,\infty)$, which implies the second inequality in~\eqref{R1}, and moreover, $F''\in L^1(0,\infty)$ holds. Due to the integrability of $F''$, we next deduce the identity
\begin{equation}
  a(s)s=F'(s) = 1-\int_s^{\infty} F''(t)\dt, \label{pomer0}
\end{equation}
and since by the bounded oscillation assumption in~\eqref{A1} we have in particular
\begin{equation*}
 F''(s) s \leq C_2 \int_s^{2s} F''(t) \dt
\end{equation*}
for all $s>0$, also the claim~\eqref{R2} follows. Finally, differentiating~\eqref{pomer0} we find
\begin{equation}\label{pomer}
a'(s)s + a(s) = F''(s),
\end{equation}
and therefore, thanks to~\eqref{R2} and~\eqref{A3}, we obtain
$$
\lim_{s \to \infty} s^2 a'(s) = \lim_{s \to \infty} \left(s F''(s) - F'(s)\right) = -1,
$$
which is the last claim~\eqref{R3}.
\end{proof}

Secondly, we define the integrand for a comparison functional, which will be used later for the construction of appropriate barrier functions. To this end, we will essentially decrease the convexity for large values of the original integrand~$F$ (note that the properties of the integrand for large values are the most crucial ones), which from a heuristic point of view will make it harder to construct solutions (cp.~the calculations for annular domains in Section~\ref{nonexistence}). However, it turns out that as long as the fundamental condition~\eqref{A2} for the new integrand is satisfied, this construction is still possible, and it is precisely the weaker convexity for large values which will allow for the verification of the barrier condition.

\begin{Lemma}\label{L2}
Let $F\in \mathcal{C}^2(\mathbb{R}^+)$ be a strictly convex function with $\lim_{s \to 0} F'(s) = 0$ which satisfies~\eqref{A1},~\eqref{A2} and~\eqref{A3}. Then there exists a strictly positive, decreasing function $g\in \mathcal{C}(\mathbb{R}^+_0)$ with $\lim_{s \to \infty} g(s) = 0$ such that
\begin{equation}
\int_0^{\infty} t F''(t) g(t)\dt = \infty.\label{G1}
\end{equation}
Moreover, the function~$F_g$ defined via
\begin{equation}
F_g(s) \coloneqq \int_0^s \Big(1-\int_r^{\infty}F''(t)g(t)\dt \Big) \dr \label{G2}
\end{equation}
is strictly convex and satisfies $\lim_{s \to 0} F_g'(s) = 0$ and~\eqref{A1}--\eqref{A3}, with possibly different constants~$C_1$ and~$C_2$.
\end{Lemma}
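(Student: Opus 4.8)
The plan is to construct the weight function $g$ first and then verify the asserted properties of $F_g$. Recall from Lemma~\ref{L1} that $F'' \in L^1(0,\infty)$, that $t F''(t) \to 0$ as $t \to \infty$, and that the bounded oscillation condition in~\eqref{A1} holds. The key quantitative input is assumption~\eqref{A2}, i.e.\ $\int_1^\infty t F''(t)\dt = \infty$. I would introduce the tail function
\begin{equation*}
  I(s) \coloneqq \int_1^s t F''(t)\dt, \qquad s \geq 1,
\end{equation*}
which is nonnegative, nondecreasing, and tends to $+\infty$ by~\eqref{A2}. The standard trick is then to set $g$ essentially equal to $1/(1 + I(s))$ for large $s$ (suitably smoothed and extended to be constant, say equal to $1$, on $[0,1]$ so that $g \in \mathcal{C}(\mathbb{R}^+_0)$, strictly positive, decreasing, with $g(s) \to 0$). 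With this choice one computes, via the substitution $\tau = I(t)$ (so $\ddd \tau = t F''(t)\dt$),
\begin{equation*}
  \int_1^\infty t F''(t) g(t)\dt \sim \int_1^\infty \frac{t F''(t)}{1 + I(t)}\dt = \int_0^\infty \frac{\ddd \tau}{1+\tau} = \infty,
\end{equation*}
which yields~\eqref{G1}; the contribution on $[0,1]$ is harmless and finite. A minor technical point is to ensure $g$ is genuinely decreasing and continuous even on intervals where $I$ is constant (i.e.\ where $F''$ vanishes), which can be handled by replacing $I(s)$ with $I(s) + s$ or by an explicit mollification — this is routine and I would not dwell on it.

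Next I would verify that $F_g$ as defined in~\eqref{G2} is well-defined and has the claimed structure. Since $0 < g \leq 1$ and $F'' \in L^1$, the inner integral $\int_r^\infty F''(t) g(t)\dt$ is finite, lies in $[0,1)$ for $r \geq 0$ (strictly less than $1$ because $g < 1$ wherever $F'' > 0$, or one checks $\int_0^\infty F'' g < \int_0^\infty F'' \leq 1$), and decreases in $r$ to $0$ as $r \to \infty$. Hence $F_g'(r) = 1 - \int_r^\infty F''(t)g(t)\dt \in (0,1)$, $F_g$ is $\mathcal{C}^2$, $\lim_{s\to 0} F_g'(s) = 1 - \int_0^\infty F'' g > 0$ — wait, this needs care: we actually want $\lim_{s\to 0} F_g'(s) = 0$, which forces the normalization $\int_0^\infty F''(t) g(t)\dt = 1$. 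This is arranged by the standard rescaling argument already invoked in the remark after Theorem~\ref{T1}: multiply $g$ by the positive constant $\big(\int_0^\infty F'' g\big)^{-1}$; this does not affect~\eqref{G1}, keeps $g$ strictly positive, decreasing, with limit $0$ (since the original $g$ had limit $0$), though one must then recheck $g \leq$ some bound — actually the normalization may push $g$ above $1$ near $0$, which is fine, we only need $F_g' < $ the correct asymptotic value after a further multiplicative normalization of $F_g$ itself, consistent with~\eqref{A3}. I would phrase this as: first pick $g$ satisfying~\eqref{G1} with $\int_0^\infty F'' g = 1$ (rescaling freely), then $F_g' \in (0,1)$ with $F_g'(0) = 0$ and $F_g'(\infty) = 1$, so~\eqref{A3} holds for $F_g$ by construction.

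It remains to check~\eqref{A1} for $F_g$. Strict convexity: $F_g''(s) = F''(s) g(s) > 0$ everywhere since $F'' > 0$ (strict convexity of $F$) and $g > 0$. The linear growth bounds $C_1 s - C_2 \leq F_g(s) \leq C_2(1+s)$: the upper bound follows from $F_g' < 1$ hence $F_g(s) \leq s$; the lower bound follows from $F_g$ convex with $F_g'(s) \to 1$, so $F_g(s) \geq F_g(s_0) + F_g'(s_0)(s - s_0) \geq \tfrac12 s - C_2$ for a suitable $s_0$ and all large $s$, adjusting constants on the bounded remaining interval. The bounded oscillation estimate $F_g''(s)/F_g''(t) \leq C_2$ for $s \geq 1$, $t \in [s/2, 2s]$: here $F_g''(s)/F_g''(t) = \big(F''(s)/F''(t)\big)\big(g(s)/g(t)\big)$; the first factor is controlled by the oscillation bound for $F$ in~\eqref{A1}, and the second is bounded because $g$ is decreasing, so $g(s)/g(t) \leq g(s)/g(2s)$ — and this last ratio is uniformly bounded provided $g$ does not decay too fast, which we can guarantee since $g \sim 1/(1+I)$ and $I$ grows at most like a fixed integral, giving $g(s)/g(2s) \leq (1 + I(2s))/(1 + I(s))$, bounded using $I(2s) - I(s) = \int_s^{2s} tF'' \leq C_2 s F''(s) \cdot \log 2 \cdot(\text{const})$ together with $sF''(s) \to 0$ — actually more simply, $I(2s)/I(s) \to 1$ is false in general, so I would instead build the doubling property of $g$ directly into its construction (e.g.\ define $g$ so that $g(2s) \geq \tfrac12 g(s)$, which is compatible with $g \to 0$ and with~\eqref{G1}). \textbf{The main obstacle} is precisely this last point: one must choose $g$ decaying slowly enough to be doubling (so that $F_g$ inherits the bounded oscillation condition) yet still making $\int_0^\infty t F''(t) g(t)\dt$ diverge; the function $g(s) \asymp 1/(1 + \int_1^s tF''\dt)$ threads this needle because the divergence of~\eqref{G1} is logarithmic-type and robust under the mild smoothing/doubling adjustments, while all other verifications are direct consequences of $0 < g \leq C$, $g$ decreasing, and the corresponding properties of $F$.
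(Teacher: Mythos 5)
Your construction of $g$ is the same as the paper's, up to cosmetics: the paper sets $\tilde g(s)=\bigl(1+\int_0^s tF''(t)\dt\bigr)^{-1}$ directly (no cut at $s=1$, so $\tilde g$ is automatically continuous and strictly decreasing on all of $\mathbb{R}^+_0$ by strict convexity of $F$), then normalizes by $A\coloneqq\int_0^\infty F''\tilde g\in(0,1]$ to obtain $g=\tilde g/A$, which handles your normalization worry in one stroke and yields $F_g'(0)=0$, $F_g'(\infty)=1$ by construction. The divergence~\eqref{G1} is exactly your logarithmic computation.

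Where you overcomplicate things is the bounded-oscillation estimate. You are right that $g(s)/g(2s)\le C$ is exactly what is needed, and right that $I(2s)/I(s)\to 1$ need not hold, but you then propose to \emph{modify} $g$ to build in a doubling property, without checking that the modification preserves~\eqref{G1}. No modification is needed: the doubling of $g$ is a direct consequence of the oscillation bound~\eqref{A1} on $F$ itself. Indeed, for $s\ge 2$ one has $t\in[s/2,2s]\subset[1,\infty)$ for all relevant $t$, so by~\eqref{A1}
\begin{equation*}
\int_s^{2s} t F''(t)\dt \le C_2 F''(s)\int_s^{2s} t\dt = \tfrac{3}{2}C_2\,s^2 F''(s),
\qquad
\int_0^s t F''(t)\dt \ge \int_{s/2}^s t F''(t)\dt \ge \tfrac{3}{8C_2}\,s^2 F''(s),
\end{equation*}
whence
\begin{equation*}
\frac{g(s)}{g(2s)}
= \frac{1+\int_0^{2s}tF''\dt}{1+\int_0^{s}tF''\dt}
= 1 + \frac{\int_s^{2s}tF''\dt}{1+\int_0^{s}tF''\dt}
\le 1 + 4C_2^2 ,
\end{*equation*}
and the range $1\le s\le 2$ is compact. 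Combined with $F''_g(s)/F''_g(t)=\bigl(F''(s)/F''(t)\bigr)\bigl(g(s)/g(t)\bigr)$ and $g$ decreasing, this gives the oscillation bound for $F_g$ with a constant like $C_2(1+4C_2^2)$, which is what the paper's ``with constant $4C_2^2$'' is gesturing at. (Your intermediate bound ``$\le C_2\,sF''(s)\cdot\log 2\cdot(\text{const})$'' is a slip — the integral of $t$ over $[s,2s]$ gives $\tfrac{3}{2}s^2$, not a $\log$ — but that is not the source of the confusion.) The rest of your verifications, strict convexity, linear growth and $(A3)$, are correct and match the paper.
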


\begin{proof}
We start by defining
\begin{equation}
\tilde{g}(s) \coloneqq \frac{1}{1+\int_0^s t F''(t) \dt},  \label{tilde}
\end{equation}
and we note that $\tilde{g}\le 1$ is a continuous, strictly decreasing function fulfilling $\tilde{g}(s)\to 0$ as $s\to \infty$. Next, we define
$$
A\coloneqq \int_0^{\infty} F''(t)\tilde{g}(t) \dt,
$$
and due to the properties of $F$ (more precisely, $F'(0) = 0$ and $\lim_{s \to \infty} F'(s)=1$) combined with $0 < \tilde{g}(s) \le 1$, we observe $A\in (0,1]$. Finally, we define
\begin{equation}
g(s)\coloneqq  \frac{\tilde{g}(s)}{A},\label{gdf}
\end{equation}
and we now show that all statements of the lemma are indeed fulfilled with such a choice of~$g$. Let us start with~\eqref{G1}. By the definition of~$g$ it directly follows that for any $s\ge 0$ we have
$$
\begin{aligned}
\int_0^s t F''(t)g(t) \dt &= \frac{1}{A} \int_0^s \frac{t F''(t)}{1+\int_0^t rF''(r)\dr} \dt\\
 & =\frac{1}{A} \int_0^s \frac{d}{\dt} \ln \Big(1+\int_0^t rF''(r)\dr \Big) \dt \\
 & = \frac{1}{A} \ln \Big(1+\int_0^s rF''(r)\dr \Big).
\end{aligned}
$$
Since $F''$ and $g$ are positive and due to the assumption~\eqref{A2}, we have that
$$
\begin{aligned}
\int_0^{\infty} t F''(t) g(t) \dt &= \lim_{s\to \infty}\int_0^s t F''(t)g(t) \dt \\
 & = \frac{1}{A} \lim_{s \to \infty}\ln \Big(1+\int_0^s rF''(r)\dr \Big) =\infty.
\end{aligned}
$$
Hence,~\eqref{G1} holds. Next, we show the properties of the function~$F_g$. By its definition~\eqref{G2} we immediately obtain $F_g(0)=0$. Moreover, we calculate its derivatives
\begin{equation}\label{hlp2}
F'_g(s)=1-\int_s^{\infty}F''(t)g(t)\dt, \qquad F''_g(s)= F''(s)g(s) >0,
\end{equation}
which shows the strict convexity and, via~\eqref{G1}, the validity of~\eqref{A2} for~$F_g$. Since by the definition of $g$ we have
\begin{equation*}
\int_0^{\infty}F''(t)g(t)\dt=1,
\end{equation*}
we find $\lim_{s \to 0} F_g'(s) = 0$ and $\lim_{s \to \infty} F'_g(s) = 1$, thus also~\eqref{A3} is satisfied for~$F_g$. Finally, concerning~\eqref{A1}, we note that the linear growth assumption follows immediately from the properties of $F_g'$, while the oscillation assumption of $F$ carries directly over to~$F_g$ (with constant $4 C_2^2$ instead of~$C_2$). This concludes the proof of the lemma.
\end{proof}

\section{Proof of  Theorem~\ref{T1}}
\label{main_proof}

We now prove the second (and main) implication of Theorem~\ref{T1}. To this end, we perform an approximation procedure, and in what follows, we will prove uniform estimates for the minimizers for a proper choice of approximative problems. This is indeed sufficient to recover the claim with the passage to the limit since the strict convexity of our functional implies uniqueness of minimizers (if it exists in the desired Dirichlet class at all). Thus, for arbitrary $\varepsilon>0$, we introduce the approximate functionals
\begin{equation*}
 w \mapsto \frac{\varepsilon}{2} \int_\Omega |\nabla w|^2 \dx + \int_\Omega F(|\nabla w|) \dx
\end{equation*}
and look for minimizers $u_\varepsilon$ in the Dirichlet class $u_0 + W^{1,2}_0(\Omega)$, which is equivalent to looking for weak solutions to the following approximate Dirichlet problem to~\eqref{P1}
\begin{equation}\label{P1e}
\begin{aligned}
-\varepsilon \Delta u_\varepsilon -\diver \big(a(|\nabla u_\varepsilon|)\nabla u_\varepsilon \big) &=0 &&\textrm{ in } \Omega,\\
u_\varepsilon &=u_0 &&\textrm{ on } \partial \Omega.
\end{aligned}
\end{equation}
Note that for the rest of the paper, we suppose, without explicit mentioning, that the function~$F$ satisfies~\eqref{A1},~\eqref{A2} and~\eqref{A3}, and that $a$ is related to $F$ via~\eqref{min2}.

Due to the application of the direct method of the calculus of variations to the approximate functionals (or the theory of monotone operators to the approximate Dirichlet problems), we have the existence of a unique weak solution $u_\varepsilon \in u_0 + W^{1,2}_0(\Omega)$. In addition, in view of the regularity of the prescribed boundary values, we have $u_\varepsilon \in \mathcal{C}^{1,\alpha}(\overline{\Omega})$ for some $\alpha >0$, see e.g.~\cite{GIAGIU84b}, and by difference quotient techniques (and possibly after regularization of~$F$ via an $\varepsilon$-mollifying kernel) we also have $u_\varepsilon \in W^{2,2}_\loc(\Omega)$. Our main goal is to show that the following uniform estimate holds
\begin{equation}
\|\nabla u_\varepsilon \|_{\infty}\le C(\Omega,F,u_0)\label{basis}
\end{equation}
with some constant $C(\Omega,F,u_0)$ being independent of $\varepsilon$. Indeed, having~\eqref{basis} in hands, we first find a subsequence converging weakly-$*$ to a function $u \in u_0 + W^{1,2}_0(\Omega)$. Then, when passing to the limit $\varepsilon \to 0$ in the approximate functionals (by lower semicontinuity) or in the approximate Dirichlet problems~\eqref{P1e} (by theory of monotone operators), the limit~$u \in u_0 + W^{1,\infty}_0(\Omega)$ turns out to be the desired solution. As is it actually Lipschitz regular, Theorem~\ref{T1} is therefore proven, provided that we can show that~\eqref{basis} holds.

\subsection{Reduction to the boundary estimates}\label{ss31}

We first show that, in order to have~\eqref{basis}, it is actually sufficient to control the normal derivatives of the solutions~$u_\varepsilon$ to the approximate problems uniformly on the boundary~$\partial \Omega$. To this end, we start by deriving some standard uniform estimates and denote by $C$ an universal constant depending only on $F$, $u_0$ and $\Omega$, but not on~$\varepsilon$. For simpler notation, we shall drop from now on the index~$\varepsilon$ and write~$u$ instead of~$u_\varepsilon$. Testing the weak formulation to~\eqref{P1e} with the function $u-u_0 \in W^{1,2}_0(\Omega)$, keeping in mind relation~\eqref{min2} and applying H\"{o}lder's inequality, we obtain
$$
\varepsilon \|\nabla u\|_2^2 + \int_{\Omega}F'(|\nabla u|)|\nabla u|\dx \le \varepsilon \|\nabla u\|_2 \|\nabla u_0\|_2 + \int_{\Omega} F'(|\nabla u|)|\nabla u_0|\dx.
$$
Hence, using Young's inequality and~\eqref{R1}, we deduce
\begin{equation}
\varepsilon \|\nabla u\|_2^2 + \|\nabla u\|_1 \le C.\label{AE1}
\end{equation}
Similarly, testing~\eqref{P1e} with the functions $(u\mp\|u_0\|_{\infty})_{\pm}$ (note that these functions are admissible since $(u\mp\|u_0\|_{\infty})_{\pm}=0$ holds on $\partial \Omega$), we get
\begin{equation*}
\varepsilon \int_{\Omega} |\nabla (u\mp\|u_0\|_{\infty})_{\pm}|^2 \dx + \int_{\Omega} a(|\nabla u|) |\nabla (u\mp\|u_0\|_{\infty})_{\pm}|^2 \dx =0.
\end{equation*}
Thus, it follows that
\begin{equation}
\|u\|_{\infty} \le \|u_0\|_{\infty} \le C. \label{AE2}
\end{equation}
To proceed further, we identify the equation for $|\nabla u|$. Applying $\frac{\partial}{\partial x_k}=:D_k$ to~\eqref{P1e}, multiplying the result by $D_k u$ and summing over $k=1, \ldots, d$, we obtain
\begin{align*}
0 & =-\varepsilon \sum_{k=1}^d D_ku \Delta D_k u - \sum_{k,i=1}^d D_k u D_i D_k \Big(F'(|\nabla u|)\frac{D_i u}{|\nabla u|}\Big)\\
  & =-\frac{\varepsilon}{2} \Delta |\nabla u|^2 + \varepsilon |\nabla^2 u|^2 -  \sum_{k,i=1}^d  D_{i} \Big(D_k \Big(F'(|\nabla u|)\frac{D_i u}{|\nabla u|} \Big)D_ku \Big) \\
  & \qquad + \sum_{k,i=1}^d D_{ik} u D_{k} \Big(F'(|\nabla u|)\frac{D_i u}{|\nabla u|}\Big)\\
  & = - \frac{\varepsilon}{2} \Delta |\nabla u|^2 -  \sum_{k,i=1}^d  D_{i} \big(A_{ik}(\nabla u) D_k |\nabla u| \big)\\
  & \quad + \varepsilon |\nabla^2 u|^2 + F''(|\nabla u|)|\nabla |\nabla u||^2  +F'(|\nabla u|)\frac{|\nabla^2 u|^2-|\nabla |\nabla u||^2}{|\nabla u|},
\end{align*}
where
\begin{equation*}
A_{ik}(\nabla u)\coloneqq \Big(|\nabla u| F''(|\nabla u|)\frac{D_i u D_ku }{|\nabla u|^2}+F'(|\nabla u|) \delta_{ik}-F'(|\nabla u|)\frac{D_i u D_ku }{|\nabla u|^2}\Big).
\end{equation*}
Consequently,
\begin{equation*}
-\frac{\varepsilon}{2} \Delta |\nabla u|^2 -  \sum_{k,i=1}^d  D_{i} \left(A_{ik}(\nabla u) D_k |\nabla u|\right)\le 0.
\end{equation*}
Since~$A$ is positively definite, we see that~$|\nabla u|^2$ is a sub-solution to a linear elliptic equation and therefore satisfies the minimum principle, i.e.,
\begin{equation*}
\|\nabla u\|_{\infty}\le \|\nabla u\|_{L^{\infty}(\partial \Omega)}.
\end{equation*}
In addition, since $u=u_0$ on~$\partial \Omega$, this implies
\begin{equation*}
\|\nabla u\|_{\infty}\le \|\nabla u_0\|_{\infty} + \Big\|\frac{\partial u}{\partial \bn}\Big\|_{L^{\infty}(\partial \Omega)},
\end{equation*}
where $\frac{\partial u}{\partial \bn}$ denotes the normal derivative of~$u$ on~$\partial \Omega$. Thus, in order to check~\eqref{basis} it remains to show that
\begin{equation}
\Big\|\frac{\partial u}{\partial \bn} \Big\|_{L^\infty(\partial \Omega)}\le C(\Omega,F,u_0).\label{basis2}
\end{equation}
The rest of the paper is devoted to the proof of~\eqref{basis2}, which will be shown via the barrier function technique.

\subsection{Prototype barrier function}
From now on, we fix the functions $g$ and $F_g$ according to Lemma~\ref{L2} and define
\begin{equation}\label{DA}
a_g(s)\coloneqq \frac{F'_g(s)}{s}.
\end{equation}
Clearly, all statements of Lemma~\ref{L1} hold also for $F_g$ and $a_g$ with possibly different constants $C_1,C_2>0$. Moreover, $F'_g$ is a strictly monotonically increasing mapping from $[0,\infty)$ to $[0,1)$ with continuous inverse. With the help of~$F_g$ we now define our prototype barrier function.

Let $r_0>0$ and $\delta\in (0,1)$ be arbitrary. We set for all $r\ge r_0$
\begin{equation}\label{DFb}
\ber(r)\coloneqq (F'_g)^{-1} \bigg(\frac{(1-\delta)^{d-1}r^{d-1}_0}{r^{d-1}}\bigg).
\end{equation}
It can be easily seen that $\ber \in \mathcal{C}^1[r_0,\infty)$ is a non-negative decreasing function. Finally, for all $x\in \mathbb{R}^d\setminus B_{r_0}(\b0)$, we define
\begin{equation}
\omer(\bx)\coloneqq \int_{r_0}^{|\bx|} \ber(r)\dr. \label{dfomer}
\end{equation}
By construction,~$\omer$ is a minimizer of the functional with integrand $F_g$ and equivalently a solution to the associated Dirichlet problem on the set $\mathbb{R}^d \setminus \overline{B_{r_{0}}(\b0)}$, but moreover, it also turns out to be super-harmonic on a subset of it.

\begin{Lemma}\label{L3}
For every $r_0>0$ and $\delta\in (0,1)$ the function~$\omer$ defined in~\eqref{dfomer} satisfies
\begin{equation}\label{P1v}
\begin{aligned}
-\diver \left( a_g(|\nabla \omer|)\nabla \omer \right) &=0 &&\textrm{ in } \mathbb{R}^d \setminus \overline{B_{r_{0}}(\b0)},\\
\omer &=0 &&\textrm{ on } \partial B_{r_{0}}(\b0).
\end{aligned}
\end{equation}
Furthermore, there holds
\begin{equation}\label{subhar}
-\Delta \omer (\bx) \ge 0 \qquad \textrm{for all } \bx \in \mathbb{R}^d \setminus \overline{B_{r_{0}}(\b0)} \textrm{ such that }a'_g(\ber(|\bx|))\le 0.
\end{equation}
\end{Lemma}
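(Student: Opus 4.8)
The plan is to exploit that $\omer$ is radial. Writing $W(r):=\int_{r_0}^r\ber(s)\ds$ so that $\omer(\bx)=W(|\bx|)$, one immediately has $W(r_0)=0$, which gives the boundary condition in~\eqref{P1v}, together with $\nabla\omer(\bx)=\ber(|\bx|)\frac{\bx}{|\bx|}$ and hence $|\nabla\omer(\bx)|=\ber(|\bx|)$. Recall that $F_g'$ is a $\mathcal{C}^1$-diffeomorphism of $[0,\infty)$ onto $[0,1)$ with $F_g'(0)=0$; since $(1-\delta)^{d-1}r_0^{d-1}|\bx|^{1-d}\in(0,1)$ for $|\bx|\ge r_0$, the function $\ber$ is well defined, of class $\mathcal{C}^1$, and strictly positive on $\mathbb{R}^d\setminus\overline{B_{r_0}(\b0)}$. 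Consequently $\omer\in\mathcal{C}^2$ there, and all manipulations below are classical.

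For the equation in~\eqref{P1v}, I would combine the definition~\eqref{DA} of $a_g$ with the defining relation~\eqref{DFb} of $\ber$: since $a_g(s)s=F_g'(s)$ and $F_g'(\ber(|\bx|))=(1-\delta)^{d-1}r_0^{d-1}|\bx|^{1-d}$, one obtains
\[
a_g(|\nabla\omer(\bx)|)\nabla\omer(\bx)=F_g'(\ber(|\bx|))\,\frac{\bx}{|\bx|}=(1-\delta)^{d-1}r_0^{d-1}\,\frac{\bx}{|\bx|^d}.
\]
A direct computation gives $\diver(\bx|\bx|^{-d})=0$ for every $\bx\neq\b0$, which yields the first assertion on $\mathbb{R}^d\setminus\overline{B_{r_0}(\b0)}$.

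For the superharmonicity statement~\eqref{subhar}, I would compute the Laplacian in radial form,
\[
\Delta\omer(\bx)=W''(|\bx|)+\frac{d-1}{|\bx|}W'(|\bx|)=\ber'(|\bx|)+\frac{d-1}{|\bx|}\ber(|\bx|),
\]
and obtain $\ber'$ by differentiating $F_g'(\ber(r))=(1-\delta)^{d-1}r_0^{d-1}r^{1-d}$ in $r$, which gives $F_g''(\ber(r))\ber'(r)=\frac{1-d}{r}F_g'(\ber(r))$ and hence
\[
\Delta\omer(\bx)=\frac{d-1}{|\bx|}\Big(\ber(|\bx|)-\frac{F_g'(\ber(|\bx|))}{F_g''(\ber(|\bx|))}\Big).
\]
Finally, differentiating $a_g(s)s=F_g'(s)$ as in~\eqref{pomer} (equivalently, directly from~\eqref{DA}) yields $s^2a_g'(s)=sF_g''(s)-F_g'(s)$, so that for $s=\ber(|\bx|)>0$ the condition $a_g'(\ber(|\bx|))\le0$ is equivalent to $\ber(|\bx|)F_g''(\ber(|\bx|))\le F_g'(\ber(|\bx|))$, i.e.\ to $\Delta\omer(\bx)\le0$. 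This proves~\eqref{subhar}, in fact with an equivalence rather than just an implication.

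\textbf{Main obstacle.} There is no serious obstacle here; the only points that deserve attention are the $\mathcal{C}^1$-regularity and the strict positivity of $\ber$ on $\mathbb{R}^d\setminus\overline{B_{r_0}(\b0)}$, which are precisely what make the differentiations and divisions above legitimate (and the former is also what allows $\omer$ to be used in the classical maximum principle later on).
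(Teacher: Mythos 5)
Your proof is correct and matches the paper's argument. The first part (the equation and boundary condition) is verified in exactly the same way, by computing that $a_g(|\nabla\omer|)\nabla\omer$ is a constant multiple of the Newtonian field $\bx/|\bx|^d$; for the superharmonicity, the paper extracts $a_g(\ber)\Delta\omer=-a_g'(\ber)\ber(\ber)'$ by expanding $\diver(a_g(|\nabla\omer|)\nabla\omer)=0$ via the product rule, whereas you compute $\Delta\omer$ directly from the radial Laplacian and implicit differentiation of the defining relation for $\ber$; the two manipulations are algebraically equivalent and both yield (as you correctly observe) the stronger statement that $\Delta\omer(\bx)\le0$ \emph{iff} $a_g'(\ber(|\bx|))\le0$.
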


\begin{proof}
Using the definition of~$\omer$, we immediately see that~$\omer$ vanishes on $\partial B_{r_{0}}(\b0)$, and we further observe
\begin{equation}
\label{omerp}
\nabla \omer(\bx) = \ber(|\bx|)\frac{\bx}{|\bx|} \quad \text{and} \quad |\nabla \omer(\bx)| = \ber(|\bx|).
\end{equation}
Via the definition of $\ber$, we thus have
\begin{equation*}
F'_g(|\nabla \omer(\bx)|) = F'_g(\ber(|\bx|))= \frac{(1-\delta)^{d-1}r^{d-1}_0}{|\bx|^{d-1}}.
\end{equation*}
Consequently, for all $|\bx|>r_0$ there holds
\begin{equation}
\begin{aligned}
\diver \left(a_g(|\nabla \omer(\bx)|)\nabla \omer (\bx)\right)&=\diver\Big(F_g'(|\nabla \omer(\bx)|)\frac{\nabla \omer(\bx)}{|\nabla \omer (\bx)|}\Big) \\
  &=(1-\delta)^{d-1}r^{d-1}_0\diver \frac{\bx }{|\bx|^{d}}=0
\end{aligned}
\end{equation}
and the solution property~\eqref{P1v} follows. Finally, we check the super-harmonicity property of~$\omer$. In view of~\eqref{P1v} and~\eqref{omerp} we get
$$
\begin{aligned}
0&=-\diver \left( a_g(|\nabla \omer(\bx)|)\nabla \omer(\bx) \right)\\
&= - a_g(|\nabla \omer(\bx)|)\Delta \omer(\bx) -\nabla a_g(|\nabla \omer(\bx)|)\cdot \nabla \omer(\bx)\\
&=- a_g(\ber(|\bx|))\Delta \omer(\bx) - a'_g(\ber(|\bx|))\ber(|\bx|)(\ber)'(|\bx|).
\end{aligned}
$$
Therefore, since the functions~$a_g$ and~$\ber$ are positive and~$\ber$ is monotonically decreasing, also the second claim~\eqref{subhar} follows.
\end{proof}

Thus,~$\omer$ is a good prototype super-solution to~\eqref{P1e} on a certain set. However, due to the possibly non-constant prescribed boundary values~$u_0$, it must be corrected, which will be done in the next step.

\subsection{True barrier function}
Here, we correct~$\omer$ via an affine function such that it will finally give us the desired super-solution property to~\eqref{P1e}. For this purpose, let $\bk\in \mathbb{R}^d$, $c\in \mathbb{R}$, $r_0>0$ and $\delta \in(0,1)$ be arbitrary. For all $\bx \in \mathbb{R}^d \setminus B_{r_0}(\b0)$, we define
\begin{equation}\label{w2.2}
\ver(\bx)\coloneqq \omer(\bx) +\bk \cdot \bx+\bc.
\end{equation}
The key properties of the function~$\ver$ are formulated in the following lemma.

\begin{Lemma}\label{L4}
For every $K>0$ there exists a number $M>0$ depending only on~$F$ and~$K$ such that for all $\bk \in B_K(\b0)$, all $c\in \mathbb{R}$, all $\delta \in (0,1)$ and all $r_0>0$ the function~$\ver$ defined in~\eqref{w2.2} satisfies the inequalities
\begin{equation}
\label{super}
\begin{split}
-\diver \big(a(|\nabla \ver(\bx)|)\nabla \ver (\bx) \big) &\ge 0,\\
-\Delta \ver(\bx)&\ge 0
\end{split}
\end{equation}
for all $\bx \in \mathbb{R}^d \setminus B_{r_0}(\b0)$ fulfilling $\ber(|\bx|)\ge M$ with $\ber$ given by~\eqref{DFb}.
\end{Lemma}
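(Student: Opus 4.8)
The plan is to reduce both inequalities in~\eqref{super} to scalar inequalities for the one–variable function $\ber$ and its derivative, and then to absorb the unfavourable lower–order term by exploiting that $F_g$ is strictly \emph{less} convex than $F$, quantified by $F''_g=F''g$ with $g(s)\to0$ (Lemma~\ref{L2}). \emph{Preliminaries.} First I note that, the correction term in~\eqref{w2.2} being affine in $\bx$, we have $\nabla^2\ver=\nabla^2\omer$ and $\Delta\ver=\Delta\omer$; in particular the second inequality in~\eqref{super} is exactly~\eqref{subhar}, which by Lemma~\ref{L3} holds wherever $a'_g(\ber(|\bx|))\le0$. Since the conclusions of Lemma~\ref{L1} apply to $F_g,a_g$ as well, we have $s^2a'(s)\to-1$ and $s^2a'_g(s)\to-1$, so there is an $M_0=M_0(F)$ with $a'(s)\le0$ and $a'_g(s)\le0$ for all $s\ge M_0$. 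Writing $\rho:=|\bx|$ and $\bn:=\bx/\rho$, I would record $\nabla\omer=\ber(\rho)\bn$ (cf.~\eqref{omerp}), the Hessian $\nabla^2\omer=(\ber)'(\rho)\bn\otimes\bn+\tfrac{\ber(\rho)}{\rho}(\bI-\bn\otimes\bn)$ with trace $\Delta\omer=(\ber)'(\rho)+(d-1)\tfrac{\ber(\rho)}{\rho}$, and — by differentiating the defining relation~\eqref{DFb} — the key identity $(\ber)'(\rho)=-\tfrac{(d-1)F'_g(\ber)}{\rho F''_g(\ber)}<0$. Finally I set $s:=|\nabla\ver|$, $\tau:=\ber+\bk\cdot\bn$ and $\sigma^2:=|\bk|^2-(\bk\cdot\bn)^2\in[0,K^2]$, noting the relations $s^2=\tau^2+\sigma^2$ and $|s-\ber|\le|\bk|\le K$.

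\emph{Expanding the divergence.} Using the identity $\diver(b(|\nabla v|)\nabla v)=b(|\nabla v|)\Delta v+b'(|\nabla v|)|\nabla v|^{-1}\nabla v\cdot(\nabla^2v)\nabla v$ with $v=\ver$, $b=a$, together with $\nabla\ver\cdot(\nabla^2\omer)\nabla\ver=(\ber)'\tau^2+\tfrac{\ber}{\rho}\sigma^2$ (immediate from the Hessian formula and $\nabla\ver\cdot\bn=\tau$), one obtains
\[
\diver\big(a(s)\nabla\ver\big)=(\ber)'\Big(a(s)+\frac{a'(s)\tau^2}{s}\Big)+\frac{\ber}{\rho}\Big((d-1)a(s)+\frac{a'(s)\sigma^2}{s}\Big).
\]
A short computation with $a=F'/s$ and $s^2=\tau^2+\sigma^2$ shows $a(s)+\tfrac{a'(s)\tau^2}{s}=\tfrac{F'(s)\sigma^2}{s^3}+\tfrac{F''(s)\tau^2}{s^2}\ge\tfrac{F''(s)\tau^2}{s^2}\ge0$, so the first summand is nonpositive (as $(\ber)'<0$), while if $\ber\ge M_0+K$ then $s\ge M_0$, hence $a'(s)\le0$ and the second bracket is at most $(d-1)a(s)\le(d-1)/s$ (using $F'\le1$ from~\eqref{R1}). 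Thus $\diver(a(s)\nabla\ver)\le0$ follows once $-(\ber)'\tfrac{F''(s)\tau^2}{s^2}\ge\tfrac{(d-1)\ber}{\rho s}$.

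\emph{Absorbing the bad term.} Inserting $-(\ber)'=\tfrac{(d-1)F'_g(\ber)}{\rho F''_g(\ber)}$ and $F''_g(\ber)=F''(\ber)g(\ber)$ (from~\eqref{hlp2}) and cancelling $\tfrac{d-1}{\rho}$, the last inequality becomes
\[
\frac{F'_g(\ber)}{g(\ber)}\cdot\frac{F''(s)}{F''(\ber)}\cdot\frac{\tau^2}{s^2}\ \ge\ \frac{\ber}{s}.
\]
For $\ber\ge\max\{1,2K\}$ one has $s\in[\ber/2,2\ber]$, so the oscillation assumption in~\eqref{A1} gives $F''(s)\ge F''(\ber)/C_2$, and the elementary bounds $\tau^2/s^2\ge\tfrac12$ and $\ber/s\le2$ hold for such $\ber$ as well; hence the display is implied by $F'_g(\ber)\ge4C_2g(\ber)$. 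Since $F'_g(\ber)\to1$ and $g(\ber)\to0$ as $\ber\to\infty$ (Lemma~\ref{L2}), this holds whenever $\ber\ge M_1$ for a suitable $M_1=M_1(F)$. Choosing $M:=\max\{M_0+K,M_1,1,2K\}$, which depends only on $F$ and $K$ (since $g,F_g,a_g,M_0,C_2$ are determined by $F$), then yields both inequalities of~\eqref{super} at every $\bx$ with $\ber(|\bx|)\ge M$.

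\emph{Main obstacle.} The crux is the last step: the genuinely positive term $\tfrac{\ber}{\rho}(d-1)a(s)\sim\tfrac{d-1}{\rho s}$ must be dominated by $-(\ber)'$ times the \emph{small} quantity $\tfrac{F''(s)\tau^2}{s^2}$. This works precisely because $|(\ber)'|=\tfrac{(d-1)F'_g(\ber)}{\rho F''_g(\ber)}$ exceeds $\tfrac1{\rho F''(\ber)}$ by the diverging factor $g(\ber)^{-1}\to\infty$ — i.e.\ it is the deliberately weakened convexity of $F_g$ baked into $\omer$ that makes the barrier condition verifiable, as anticipated in the comment preceding Lemma~\ref{L2}. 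The only genuinely technical point is transferring the size of $F''$ from the argument $s=|\nabla\ver|$ to the argument $\ber$, which is what the bounded–oscillation hypothesis in~\eqref{A1} provides.
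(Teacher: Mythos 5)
Your proof is correct, and the underlying mechanism is the same as the paper's --- namely, that the factor $1/g(\ber)$ hidden in $-(\ber)'=\frac{(d-1)F'_g(\ber)}{\rho F''_g(\ber)}$ through $F''_g=gF''$ dominates everything else, once the bounded--oscillation hypothesis transfers $F''$ from the argument $s=|\nabla\ver|$ to $\ber$ --- but the algebraic organization is genuinely different. The paper expands $-\diver(a(|\nabla\ver|)\nabla\ver)$ as $L_1+L_2$ (splitting off $\nabla a\cdot\nabla\ver$ from $a\,\Delta\ver$), then reassembles into $\tilde L_1+\tilde L_2$, where $\tilde L_1$ is nonnegative by Cauchy--Schwarz plus the sign of $a'$, and $\tilde L_2$ is $-a(\ber)'$ times the \emph{difference of the ellipticity ratios} $\frac{a'(s)s}{a(s)}-\frac{a'_g(\ber)\ber}{a_g(\ber)}$, whose sign is then read off after rewriting both as $\frac{tF''(t)}{1-\int_t^\infty F''}$--type quotients; incidentally, that estimate uses that $g$ is decreasing as well as $g\to0$. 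You instead compute the Hessian $\nabla^2\omer$ in its natural rank-one plus tangential form, expand $\diver(a(s)\nabla\ver)=a(s)\Delta\ver+\frac{a'(s)}{s}\nabla\ver\cdot(\nabla^2\omer)\nabla\ver$, and group by the two scalar coefficients $(\ber)'<0$ and $\ber/\rho>0$. The first bracket you lower-bound by $F''(s)\tau^2/s^2$, the second you upper-bound crudely by $(d-1)a(s)\le(d-1)/s$ (once $a'(s)\le0$), and the whole argument collapses to the transparent scalar condition $F'_g(\ber)\ge 4C_2\,g(\ber)$, which follows from $F'_g\to1$ and $g\to0$. This buys a somewhat more elementary absorption inequality and avoids any appeal to the monotonicity of $g$; what it loses is the paper's structural interpretation of the key term as a comparison of ellipticity ratios of the true coefficient $a$ and the comparison coefficient $a_g$. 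Both are valid and of comparable length; yours is arguably the more computational and the paper's the more conceptual.

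One small point worth flagging: the bound $\tau^2/s^2\ge\tfrac12$ does not follow from the crude estimates $\tau\ge\ber/2$, $s\le\tfrac32\ber$ alone (those only give $\tau/s\ge\tfrac13$); it requires the sharper observation that $\sigma^2\le K^2\le\tau^2$ once $\ber\ge 2K$, so that $s^2=\tau^2+\sigma^2\le 2\tau^2$. Your proposal states the conclusion without this intermediate step, so you should spell it out.
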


\begin{proof}
First, it is evident that for all $\bx\in\mathbb{R}^{d}\setminus B_{r_0}(\b0)$
\begin{equation}
\nabla \ver(\bx) = \nabla \omer(\bx)+ \bk=\ber(|\bx|)\frac{\bx}{|\bx|} + \bk. \label{nabv}
\end{equation}
Consequently, a direct computation leads to
\begin{equation*}
\begin{split}
|\nabla \ver(\bx)|^2 &= (\ber)^2(|\bx|) + |\bk|^2 + 2 \ber(|\bx|) \frac{\bk \cdot \bx}{|\bx|},\\
\nabla |\nabla \ver(\bx)|&=\frac{\ber (|\bx|)(\ber)'(|\bx|)\frac{\bx}{|\bx|}+(\ber)'(|\bx|) \frac{\bx}{|\bx|}\frac{\bk \cdot \bx}{|\bx|}+\ber(|\bx|)\big( \frac{\bk}{|\bx|}-\frac{(\bk \cdot \bx) \bx}{|\bx|^3}\big) }{|\nabla \ver(\bx)|}.
\end{split}
\end{equation*}
Hence, using these identities, we obtain the following auxiliary results that will be used later
\begin{align}\label{ll1}
\nabla |\nabla \ver(\bx)| \cdot \frac{\bx}{|\bx|}&=(\ber)'(|\bx|)\frac{\ber(|\bx|)  +  \frac{\bk \cdot \bx}{|\bx|}}{|\nabla \ver(\bx)|}
\end{align}
and
\begin{equation}
\begin{split}\label{ll2}
\nabla |\nabla \ver(\bx)| \cdot \bk &=\frac{\ber(|\bx|) (\ber)'(|\bx|)\frac{\bx \cdot \bk}{|\bx|} +(\ber)'(|\bx|) \frac{(\bk \cdot \bx)^2}{|\bx|^2} + \ber(|\bx|) \big(\frac{|\bk|^2}{|\bx|}- \frac{(\bk \cdot \bx)^2}{|\bx|^3}\big)}{|\nabla \ver(\bx)|}.
\end{split}
\end{equation}

Let us now evaluate the super-solution and super-harmonicity properties. To this end, we introduce the abbreviation
\begin{align*}
L(\bx) & \coloneqq -\diver \big( a(|\nabla \ver(\bx)|)\nabla \ver(\bx) \big) \\
  & = - \nabla a(|\nabla \ver(\bx)|) \cdot \nabla \ver(\bx) - a(|\nabla \ver(\bx)|) \diver \big( \nabla \ver(\bx) \big) \eqqcolon L_1(\bx) + L_2(\bx).
\end{align*}
Employing~\eqref{nabv},~\eqref{ll1} and~\eqref{ll2}, we first calculate
\begin{align*}
L_1(\bx) & = - a'(|\nabla \ver(\bx)|) \nabla |\nabla \ver(\bx)| \cdot \nabla \ver(\bx) \\
  & = - a'(|\nabla \ver(\bx)|) \nabla |\nabla \ver(\bx)| \cdot \Big[ \ber(|\bx|) \frac{\bx}{|\bx|} + \bk \Big]\\
  & = - \frac{a'(|\nabla \ver(\bx)|)}{|\nabla \ver(\bx)|} \bigg[ \ber(|\bx|)(\ber)'(|\bx|) \Big(\ber(|\bx|)  +  \frac{\bk \cdot \bx}{|\bx|} \Big) \\
  & \hspace{0.5cm} \qquad + \ber(|\bx|) (\ber)'(|\bx|)\frac{\bx \cdot \bk}{|\bx|} +(\ber)'(|\bx|) \frac{(\bk \cdot \bx)^2}{|\bx|^2} + \ber(|\bx|) \Big(\frac{|\bk|^2}{|\bx|}- \frac{(\bk \cdot \bx)^2}{|\bx|^3}\Big) \bigg]\\
  & = \frac{a'(|\nabla \ver(\bx)|)}{|\nabla \ver(\bx)|} (\ber)'(|\bx|) \Big(|\bx|-\frac{\ber(|\bx|)}{(\ber)'(|\bx|)}\Big) \Big(\frac{|\bk|^2}{|\bx|}- \frac{(\bk \cdot \bx)^2}{|\bx|^3}\Big)\\
 &\quad-a'(|\nabla \ver(\bx)|)(\ber)'(|\bx|)|\nabla \ver(\bx)|.
\end{align*}
Next, taking into account once again~\eqref{nabv}, the relation~\eqref{omerp} and the fact that $\omer$ solves equation~\eqref{P1v}, we find
\begin{align*}
L_2(\bx)& = - a(|\nabla \ver(\bx)|)  \diver \bigg(\frac{a_g(|\nabla \omer(\bx)|)\nabla \omer(\bx)}{a_g(|\nabla \omer(\bx)|)}\bigg)\\
 & = \frac{a(|\nabla \ver(\bx)|)a'_g(|\nabla \omer(\bx)|)}{a_g(|\nabla \omer(\bx)|)} \nabla |\nabla \omer(\bx)| \cdot \nabla \omer(\bx)\\
 & = \frac{a(|\nabla \ver(\bx)|)a'_g(\ber(|\bx|))}{a_g(\ber(|\bx|))} (\ber)'(|\bx|) \ber(|\bx|).
\end{align*}
In conclusion, after a simple algebraic manipulation, we have
\begin{align*}
 L(\bx) & = \frac{a'(|\nabla \ver(\bx)|)}{|\nabla \ver(\bx)|} (\ber)'(|\bx|) \Big(|\bx|-\frac{\ber(|\bx|)}{(\ber)'(|\bx|)}\Big) \Big(\frac{|\bk|^2}{|\bx|}- \frac{(\bk \cdot \bx)^2}{|\bx|^3}\Big)\\
  & \qquad - a(|\nabla \ver(\bx)|) (\ber)'(|\bx|) \bigg(\frac{a'(|\nabla \ver(\bx)|)|\nabla \ver(\bx)|}{a(|\nabla \ver(\bx)|)}-\frac{a'_g(\ber(|\bx|))}{a_g(\ber(|\bx|))}  \ber(|\bx|)\bigg) \\
  & \eqqcolon \tilde{L}_1(\bx) +  \tilde{L}_2(\bx).
\end{align*}
We now focus on estimating the resulting terms and will show that both are non-negative in a suitably chosen set. To this end, we first relate~$\ber(|\bx|)$ and~$|\nabla \ver (\bx)|$ and provide some basic estimates, for sufficiently large values of~$\ber(|\bx|)$. Since $|\bk|\le K$, we deduce from~\eqref{nabv} that for $M_1 \coloneqq 2 K >0$ there holds
\begin{equation}
\ber(|\bx|)\ge M_1 \implies \ber(|\bx|)\le 2|\nabla \ver (\bx)|\le 4\ber(|\bx|).
\label{dist}
\end{equation}
In turn, relying on~\eqref{R3} (for both functions~$a$ and~$a_g$), we find a constant $M_2 \geq M_1$ depending only on~$F$,~$g$ and~$K$ such that
\begin{align}
\ber(|\bx|)\ge M_2 & \implies \left\{
\begin{aligned}
 (\ber)^2(|\bx|)a'_g(\ber(|\bx|))&\le - \frac12\\
 |\nabla \ver (\bx)|^2a'(|\nabla \ver (\bx)|)&\le - \frac12
\end{aligned}
\right. \nonumber \\
\label{dist2}
 & \implies a'_g(\ber(|\bx|)) < 0 \quad \textrm{and} \quad a'(|\nabla \ver (\bx)|) < 0.
\end{align}
This implication now allows us to deduce the positivity of~$\tilde{L}_1(\bx)$ and the super-harmonicity of~$\ver (\bx)$ (thus, the second claim of the lemma), provided that $\ber(|\bx|)\ge M_2$ holds. In fact, since $\ber$ is a non-negative decreasing function and by using the Cauchy-Schwarz inequality, we see that the first and second expression in large brackets in the definition of~$\tilde{L}_1(\bx)$ are non-negative. Thus, in view of~\eqref{dist2} and once again the monotonicity of~$\ber$, the sign of~$\tilde{L}_1(\bx)$ is non-negative. Secondly,~\eqref{nabv} yields $\Delta \ver(\bx) = \Delta \omer (\bx)$, thus the super-harmonicity of~$\ver (\bx)$ follows from~\eqref{subhar} and~\eqref{dist2}. In conclusion, we have the implication
\begin{equation}
\ber(|\bx|)\ge M_2 \implies - \Delta \ver(\bx) \ge 0 \quad \textrm{and} \quad \tilde{L}_1(\bx) \ge 0.
\label{dist3}
\end{equation}
Finally, we discuss the sign of~$\tilde{L}_2(\bx)$. Using~\eqref{pomer},~\eqref{pomer0},~\eqref{hlp2} and~\eqref{DA}, we evaluate
\begin{align*}
\frac{a'(s)s}{a(s)}&=\frac{F''(s)}{a(s)}-1 = \frac{sF''(s)}{1-\int_s^{\infty} F''(t)\dt}-1, \\
\frac{a'_g(s)s}{a_g(s)}&=\frac{sF''_g(s)}{F'_g(s)}-1 = \frac{s~g(s)F''(s)}{1-\int_s^{\infty} g(t)F''(t)\dt}-1.
\end{align*}
In this way the expression for~$\tilde{L}_2(\bx)$ reduces to
\begin{multline}
\label{est-L2_tilde}
\tilde{L}_2(\bx) = - a(|\nabla \ver(\bx)|) (\ber)'(|\bx|)\\
  \times \bigg(\frac{|\nabla \ver(\bx)|F''(|\nabla \ver(\bx)|)}{1-\int_{|\nabla \ver(\bx)|}^{\infty} F''(t)\dt} -\frac{\ber(|\bx|)g(\ber(|\bx|))F''(\ber(|\bx|))}{1-\int_{\ber(|\bx|)}^{\infty} g(t)F''(t)\dt}\bigg).
\end{multline}
For $\ber(|\bx|)\ge M_2$, we then find, employing~\eqref{dist}, the oscillation assumption~\eqref{A1} on~$F$ and the fact that~$g$ is a positive and monotonically decreasing function, the inequalities
\begin{equation*}
 |\nabla \ver(\bx) F''(|\nabla \ver(\bx)|) \geq \frac{1}{2C_2} \ber(|\bx|) F''(\ber(|\bx|))
\end{equation*}
and
\begin{equation*}
 \int_{|\nabla \ver(\bx)|}^{\infty} F''(t)\dt \geq \frac{1}{2C_2 g(\ber(|\bx|))} \int_{\ber(|\bx|)}^{\infty} g(t) F''(t)\dt.
\end{equation*}
At this stage, we select a number $M \geq M_2$ depending only on~$F$,~$g$ and~$K$ such that $2 C_2 g(M) \leq 1$ holds which is possible because of $g(t)\to 0$ as $t\to \infty$, according to Lemma~\ref{L2}. With this choice and the integrability of ~$F''$  over~$\mathbb{R}^+$ (with integral equal to~$1$), we see easily that the expression in large brackets on the right-hand side of~\eqref{est-L2_tilde} is non-negative, whenever $\ber(|\bx|)\ge M$ holds. Consequently, using also the facts that $\ber$ is monotonically decreasing and that~$a$ is non-negative (thanks to~\eqref{min2} and the non-negativity of~$F'$), we arrive at the implication
\begin{equation*}
\ber(|\bx|)\ge M \implies\tilde{L}_2(\bx) \ge 0.
\end{equation*}
Combined with~\eqref{dist3}, we finally conclude that~$L(\bx)\ge0$ holds for all $\bx \in \mathbb{R}^d \setminus B_{r_0}(\b0)$ with $\ber(|\bx|)\ge M$, and the proof of the lemma is complete.
\end{proof}

\subsection{Estimates of the normal derivatives}
Once the true barrier function from Lemma~\ref{L4} is at our disposal, we can return to study the normal derivative, with the aim to prove an estimate of the form~\eqref{basis2}.

The strategy of proof is as follows: by adjusting the true barrier function from Lemma~\ref{L4} to our needs, we construct in a first step a \emph{\textup{(}local upper\textup{)} barrier function} relative to the Dirichlet problem~\eqref{P1} for an arbitrary given boundary point~$\bx_0$. This means that we specify a relative neighborhood~$U(\bx_0)$ of~$\bx_0$ in~$\Omega$ and a local Lipschitz continuous function~$v$ (which will be an affine perturbation of the function from Lemma~\ref{L4}) defined on~$U(\bx_0)$ such that
\begin{enumerate}[font=\normalfont, label=(\roman{*}), ref=(\roman{*})]
 \item $v$ is a super-solution of the equation in~$U(\bx_0)$, i.e., $-\diver (a(|\nabla v|)\nabla v ) \geq 0$ in~$U(\bx_0)$,
 \item $v$ lies above the solution $u$ on~$\partial U(\bx_0)$ and coincides for $\bx_0$, i.e. $v \geq u$ on~$\partial U(\bx_0)$ and $v(\bx_0)=u(\bx_0)$.
\end{enumerate}
Via a comparison principle applied to the solution~$u$ and the super-solution~$v$, we can finally estimate the normal derivative of~$u$ at~$\bx_0$ by the sup-norm of the derivative of the barrier function~$v$ (which in turn is bounded in terms of the data) and arrive at the assertion~\eqref{basis2}.

\smallskip

\noindent
\begin{minipage}[h]{0.4\linewidth}
 \includegraphics[width=\textwidth]{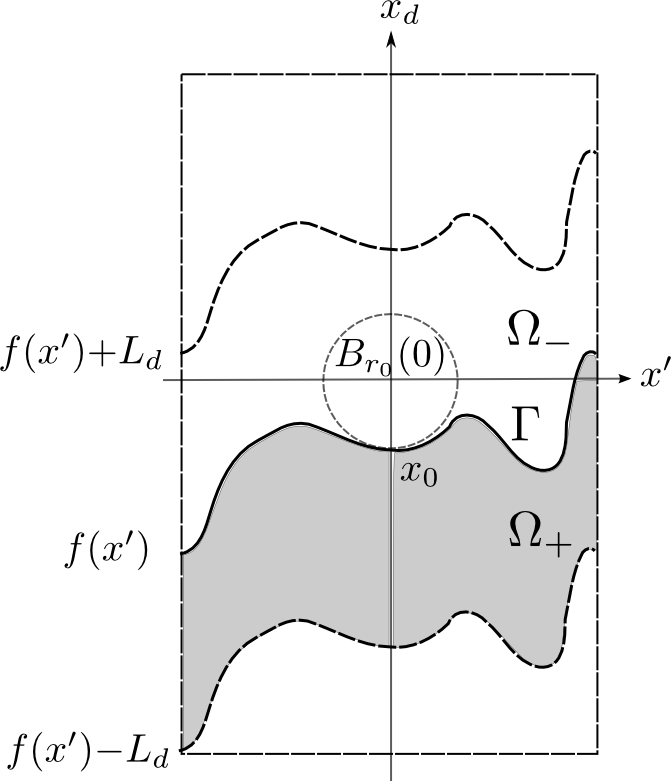}
\end{minipage}
\begin{minipage}[h]{0.59\linewidth}
Now we start with the derivation of the estimates for the normal derivative. Since $\Omega$ is by assumption of class~$\mathcal{C}^{1}$ and satisfies an exterior ball condition, we find positive constants $r_0$, $L$, $L_d$ and~$N$ depending only on~$\Omega$ such that we can suppose that an arbitrary boundary point $\bx_0\in \partial \Omega$  is given, after an orthogonal transformation, by $\bx_0=(\b0, -r_0)$ (we use the notation $\bx=(\bx',x_d)$) and that we have the inclusions
\begin{equation*}
\begin{aligned}
\Gamma & \coloneqq \{\bx\in \mathbb{R}^d \colon |\bx'|< L, \; f(\bx')=x_d\} \\
       & \subset \partial \Omega,\\
\Omega_+ & \coloneqq \{\bx\in \mathbb{R}^d \colon |\bx'|< L, \; f(\bx') - L_d < x_d < f(\bx')\} \\
       &\subset \Omega,\\
\Omega_{-} & \coloneqq \{\bx\in \mathbb{R}^d \colon |\bx'|< L, \; f(\bx')<x_d<f(\bx')+L_d\} \\
       & \subset \mathbb{R}^d \setminus\Omega,
\end{aligned}
\end{equation*}
with a function $f\in \mathcal{C}^{1}(-L,L)^{d-1}$ fulfilling $\|f\|_{1,\infty}\le N$, $f(\b0')=-r_0$ and $D_i f(\b0)=0$ for all $i=1,\ldots, d-1$.
\end{minipage}

In addition, we may suppose that~$r_0$ is so small that $B_{r_0}(\b0)\subset \Omega_{-}$ holds and that
\begin{equation}
\label{bound-dist-cond}
M^*(|\bx|-r_0)\ge |\bx-\bx_0|^2
\end{equation}
is satisfied for all $\bx \in \Gamma$, for some constant~$M^*$ depending only on~$\Omega$ and~$r_0$ (this can for example be seen easily if also $B_{2r_0}(\b0',r_0) \subset \Omega_{-}$ holds).

In this setting, for an arbitrary $\delta\in(0,1)$ (to be specified later on) we can work with the functions~$\ber$ and~$\omer$ introduced in~\eqref{DFb} and~\eqref{dfomer}, and with the function~$\ver$ from~\eqref{w2.2} for the specific choices $\bk \coloneqq \nabla u_0(\bx_0)$ and $c=u_0(\bx_0) - \nabla u_0(\bx_0) \cdot \bx_0$, that is, with
\begin{equation}\label{dfvef}
\ver(\bx)=\omer(\bx) +\nabla u_0 (\bx_0) \cdot (\bx-\bx_0) + u_0(\bx_0).
\end{equation}
Note that $\ver$ is well defined outside the ball $B_{r_0}(\b0)$ and so it is well-defined also in $\Omega_+$. In addition, it is clear that $|\bk|\le \|\nabla u_0\|_{\infty}$ holds, hence, we can choose $K \coloneqq \|\nabla u_0\|_{\infty}$ and fix the number~$M$ (depending only on~$F$ and this~$K$) according to Lemma~\ref{L4}. Furthermore, since $(F'_g)^{-1}$ maps $[0,1)$ to $[0,\infty)$ and is monotonically increasing, we can fix a number $\delta_{\max} \in (0,1/2)$ such that
\begin{equation}\label{C1}
(F'_g)^{-1}(s)\ge \max\big\{M, M^* \|u_0\|_{1,\infty}\big\}  \quad \textrm{ for all } s \in \big[(1-2\delta_{\max})^{d-1},1\big).
\end{equation}
From now on, we will consider arbitrary $\delta \in (0,\delta_{\max})$. Then, from~\eqref{DFb} and~\eqref{C1} it follows that
\begin{equation}\label{DFb2}
r_0 < |\bx| \leq \frac{(1-\delta_{\max})r_0}{1-2\delta_{\max}} \eqqcolon r_{\max}  \implies \ber(|\bx|) \geq M.
\end{equation}
Consequently, using Lemma~\ref{L4}, we see that $\ver$ is a super-solution to~\eqref{P1e} in the set $(B_{r_{\max}}\setminus B_{r_0})\cap \Omega_+$, which is the first crucial property of an upper barrier.

Next, we want to identify a part of $\Gamma$ on which $u(\bx) = u_0(\bx)\le\ver(\bx)$ holds, that is, where
\begin{equation}\label{dui}
\omer(\bx) +\nabla u_0(\bx_0)\cdot (\bx -\bx_0) +u_0(\bx_0)-u_0(\bx) \geq 0.
\end{equation}
From Taylor expansion of~$u_0$ and the~$\mathcal{C}^{1,1}$-regularity assumption on~$u_0$ we know that
$$
\begin{aligned}
&\left|u_0(\bx)-u_0(\bx_0)-\nabla u_0(\bx_0)\cdot (\bx - \bx_0)\right|\le \|u_0\|_{1,\infty} |\bx-\bx_0|^2,
\end{aligned}
$$
so to verify~\eqref{dui} it is enough to check where
\begin{equation}
\omer(\bx) - \|u_0\|_{1,\infty} |\bx - \bx_0|^2 \geq 0 \label{dui3}
\end{equation}
holds. Using the definitions of $\ber$ in~\eqref{DFb} and of~$\omer$ in~\eqref{dfomer}, combined with the fact that~$(F'_g)^{-1}$ is monotonically increasing, we have for all $\bx\in \Gamma$
\begin{equation*}
\omer(\bx)\ge (|\bx|-r_0) (F'_g)^{-1} \bigg(\frac{(1-\delta)^{d-1}r^{d-1}_0}{|\bx|^{d-1}}\bigg).
\end{equation*}
Consequently, in order to guarantee~\eqref{dui3} and thus~\eqref{dui} it is sufficient, in view of~\eqref{bound-dist-cond}, to have
\begin{equation*}
(F'_g)^{-1} \bigg(\frac{(1-\delta)^{d-1}r^{d-1}_0}{|\bx|^{d-1}}\bigg)\ge M^* \|u_0\|_{1,\infty},
\end{equation*}
which is indeed true for all~$\bx$ with $r_0 \leq |\bx| \leq r_{\max}$, by the choices of the parameter~$\delta_{\max}$ in~\eqref{DFb2} and of the radius~$r_{\max}$ in~\eqref{DFb2}. Thus, we have verified
\begin{equation}
\label{boundary_values_Gamma}
u(\bx)\le\ver(\bx) \qquad \textrm{for all } \bx \in \Gamma \textrm{ with } r_0 \leq |\bx|\leq r_{\max}.
\end{equation}

Finally, in order to complete the second property of the barrier function that it lies above the solution on all of the boundary of a relative neighborhood of~$\bx_0$, we still need to take care of the values of $\ver(\bx)$ inside of $\Omega$ (but close to~$\bx_0$). This shall now be accomplished by a suitable choices of a local neighborhood and of~$\delta \in (0,\delta_{\max})$.

\noindent
\begin{minipage}[h]{0.4\linewidth}
 \includegraphics[width=\textwidth]{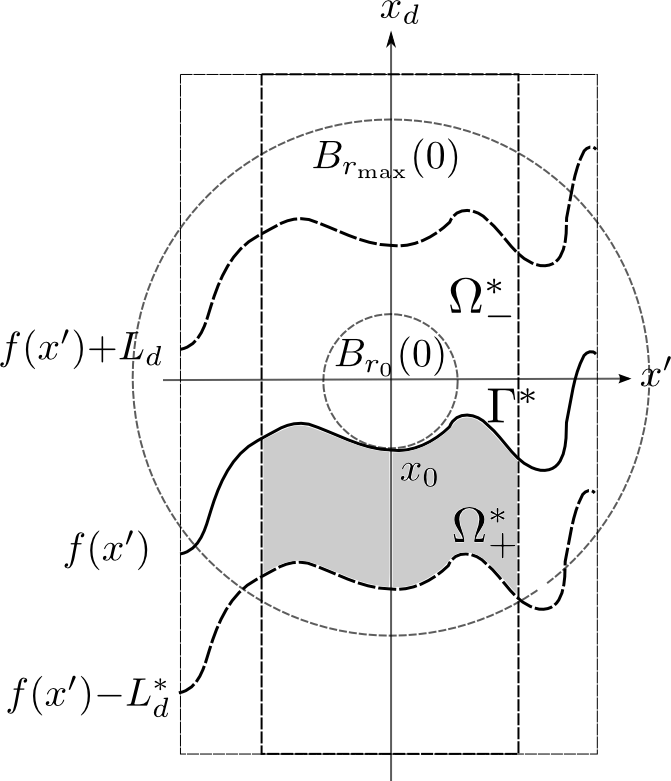}
\end{minipage}
\begin{minipage}[h]{0.59\linewidth}
First, since~$r_0$ and $r_{\max}$ are already fixed (in dependence on~$\Omega$,~$F$ and~$u_0$), we can zoom in the neighborhood of~$\bx_0$ and find~$L^*$ and $L^*_{d} \leq L_d$ sufficiently small (depending again only on data) such that
\begin{equation*}
\begin{aligned}
\Gamma^*& \coloneqq \{\bx\in \mathbb{R}^d \colon |\bx'|< L^*, \; f(\bx')=x_d\} \\
    & \subset \partial \Omega,\\
\Omega^*_+& \coloneqq \{\bx\in \mathbb{R}^d \colon |\bx'|< L^*, \; f(\bx') - L^*_d < x_d < f(\bx'\} \\
    & \subset \Omega \cap (B_{r_{\max}}\setminus B_{r_0}),\\
\Omega^*_{-}& \coloneqq \{\bx\in \mathbb{R}^d \colon |\bx'|< L^*,  \; f(\bx')<x_d<f(\bx')+L_d\} \\
    & \subset \mathbb{R}^d \setminus\Omega.
\end{aligned}
\end{equation*}
By these choices, due to~\eqref{DFb2} and~\eqref{boundary_values_Gamma}, $\ver$ is a super-solution to~\eqref{P1e} in the relative neighborhood~$\Omega^*_+$ of~$\bx_0$ and satisfies $\ver \geq u$ on~$\Gamma^*$, for all $\delta < \delta_{\max}$. Our goal is to show $\ver\ge u$ on the remaining part of $\partial \Omega_+^*$, and this is indeed the point, where we shall use the assumption~\eqref{A2}.
\end{minipage}

In view of the choice of~$r_0$ and~$\Omega_+^*$ there exists $\eta>0$ independent of $\delta$ such that for all $\bx \in \partial \Omega_+^* \setminus \Gamma^*$ there holds
$$
|\bx|\ge r_0 + \eta.
$$
Hence, from the definition of~$\ver$ in~\eqref{dfvef} and of~$\omer$ in~\eqref{dfomer} we find
\begin{equation}
\ver(\bx) \ge \int_{r_0}^{r_0 +\eta} \ber (r) \dr - C^*(\Omega)  \|u_0\|_{1,\infty}. \label{nemohu}
\end{equation}
On the other hand, we know $\|u\|_{\infty} \le \|u_0\|_{\infty}$ from~\eqref{AE2}. Therefore, in order to show that $\ver \ge u$ on $\partial \Omega_+^*$, it is enough to verify that we can choose $\delta\in (0,\delta_{\max})$ in such way that
\begin{equation}
\int_{r_0}^{r_0 +\eta} \ber (r) \dr \ge C^*(\Omega)  \|u_0\|_{1,\infty} + \|u_0\|_{\infty}.\label{konec}
\end{equation}
Using the definition of~$\ber$ in~\eqref{DFb} and the substitution formula, we deduce that
\begin{align*}
\int_{r_0}^{r_0 +\eta} \ber (r) \dr &=\int_{r_0}^{r_0 + \eta}(F'_g)^{-1} \bigg(\frac{(1-\delta)^{d-1}r^{d-1}_0}{r^{d-1}}\bigg)\dr\\
&=\frac{(1-\delta)r_0}{d-1} \int^{(1-\delta)^{d-1}}_{\frac{(1-\delta)^{d-1}r^{d-1}_0}{(r_0+\eta)^{d-1}}
}  \frac{(F'_g)^{-1}(s)}{s^{\frac{d}{d-1}}} \ds.
\end{align*}
If we now introduce
\begin{equation*}
 \alpha \coloneqq \min \Big\{ \frac{r_0}{d-1}, 1 - \frac{r_0^{d-1}}{(r_0 + \eta)^{d-1}}\Big\}
\end{equation*}
(depending only on~$\Omega$, as~$r_0$ and~$\eta$ are already fixed), the above integral can be estimated by
\begin{align*}
\int_{r_0}^{r_0 +\eta} \ber (r)\dr & \ge \alpha \int^{(1-\delta)^{d-1}}_{1-\alpha}  (F'_g)^{-1}(s)\ds\\
&= \alpha\int_{(F'_g)^{-1}(1-\alpha)}^{(F'_g)^{-1}((1-\delta)^{d-1})} t F''_g(t)\dt
\end{align*}
(note that the integral on the right-hand side is negative whenever $1-\alpha > (1-\delta)^{d-1}$ holds, hence, the previous inequality is trivially satisfied in this case). Thanks to Lemma~\ref{L2} (more precisely, by~\eqref{A3} and~\eqref{A2} for~$F_g$), we know first that $(F'_g(s))^{-1} \to \infty$ as $s\to 1$ and secondly that
\begin{equation*}
\int_1^{\infty} t F''_g(t)\dt = \infty.
\end{equation*}
Therefore, we can fix $\delta\in (0, \delta_{\max})$ (depending only on~$\Omega$,~$F$ and $u_0$) such that
\begin{equation*}
\int_{r_0}^{r_0 +\eta} \ber (r)\dr \geq \alpha\int_{(F'_g)^{-1}(1-\alpha)}^{(F'_g)^{-1}((1-\delta)^{d-1})} t F''_g(t) \dt \ge  C^*(\Omega)  \|u_0\|_{1,\infty} + \|u_0\|_{\infty}
\end{equation*}
holds, which yields the desired inequality~\eqref{konec}. Therefore, we have proved
\begin{equation}
\label{boundary_values_final}
 \ver \ge u \qquad \textrm{on } \partial \Omega_+^*
\end{equation}
and have finished the construction of the barrier function on the relative neighborhood~$\Omega_+$ of~$\bx_0$.

It now remains to establish the bound~\eqref{basis2} for the normal derivative of~$u$, locally at~$\bx_0$. For this purpose, we recall that~$\ver$ is a super-solution to~\eqref{P1e} and $u$ is a solution to~\eqref{P1e} in $\Omega_+^*$, Thus, in view of~\eqref{boundary_values_final} we obtain from the comparison principle for elliptic equations in divergence form
\begin{equation*}
\ver \ge u \qquad \textrm{ in } \Omega_+^*.
\end{equation*}
This now allows to estimate the normal derivative. Indeed, for any $0<h\le L_d^*$ we have $\bx\coloneqq (\b0',-r_0-h)\in \Omega_+^*$, and therefore, taking into account also $\ver(\bx_0) = u(\bx_0)$, we find the following estimate
\begin{align*}
\frac{u(\bx_0)-u(\bx)}{h} &= \frac{\ver(\bx_0)-\ver(\bx)}{h} + \frac{\ver(\bx)-u(\bx)}{h} \\
 & \ge \frac{\ver(\bx_0)-\ver(\bx)}{h} \ge -\|\ver\|_{1,\infty}\ge -C(\Omega, F, u_0).
\end{align*}
Thus, recalling that the outer unit normal to~$\partial \Omega$ in~$\bx_0$ is given by $e_n$, we obtain in the passage $h\to 0_+$ the lower bound
\begin{equation*}
\frac{\partial u(\bx_0)}{\partial \bn} \ge -C(\Omega, F, u_0).
\end{equation*}
Repeating the whole procedure with~$\omer$ replaced by~$-\omer$, we get the opposite inequality
\begin{equation*}
\frac{\partial u(\bx_0)}{\partial \bn} \le C(\Omega, F, u_0).
\end{equation*}
The latter two inequalities imply~\eqref{basis2}, which in turn, due to Subsection~\ref{ss31}, provides
\begin{equation*}
\|\nabla u\|_{\infty} \le C(\Omega, F, u_0)
\end{equation*}
(which, as the index~$\varepsilon$ was dropped, is precisely the uniform bound~\eqref{basis}). This finishes the proof of the theorem. \qed

\bibliographystyle{plain}

\end{document}